\newtheorem{same}{This should never appear}[section]
\newtheorem{defin}[same]{Definition}
\newtheorem{remark}[same]{Remark}
\newtheorem{theorem}[same]{Theorem}
\newtheorem{example}[same]{Example}
\newtheorem{lemma}[same]{Lemma}
\newtheorem{fact}[same]{Fact}
\newtheorem{question}[same]{Question}
\newtheorem{cor}[same]{Corollary}
\newtheorem{prop}[same]{Proposition}
\newtheorem{hypothesis}[same]{Hypothesis}
\newtheorem{defin*}{Definition}
\newtheorem*{theorem*}{Theorem}
\newcommand{\skipitems}[1]{%
  \addtocounter{\@enumctr}{#1}%
}
\newcommand{\bb}{\mathbf{b}}
\newcommand{\rest}{\mathord{\upharpoonright}}
\newcommand{\id}{\textrm{id}}
\newcommand{\K}{\mathbf{K}}
\newcommand{\s}{\mathfrak{s}}
\newcommand{\Kp}{\K^{p\text{-grp}}}
\newcommand{\Kt}{\K^{\s\text{-Tor}}}
\newcommand{\Ks}{K^{\s\text{-Tor}}}
\newcommand{\Kto}{K^{R\text{-Tor}}}
\newcommand{\Kabt}{\K^{Tor}}
\newcommand{\LS}{\operatorname{LS}}
\newcommand{\leap}[1]{\le_{#1}}
\newcommand{\lea}{\leap{\K}}
\newcommand{\gtp}{\mathbf{gtp}}
\newcommand{\gS}{\mathbf{gS}}
\DeclareMathOperator{\cof}{cf}    
\title{A note on torsion modules with pure embeddings}
\author{Marcos Mazari-Armida}
\email{mmazaria@andrew.cmu.edu}
\urladdr{http://www.math.cmu.edu/~mmazaria/ }
\address{Department of Mathematical Sciences \\ Carnegie Mellon
University \\ Pittsburgh, Pennsylvania, USA}
\address{Current Address: Department of Mathematics \\ University of Colorado Boulder \\ Boulder, Colorado, USA}
\begin{document}

\begin{abstract}
We study Martsinkovsky-Russell torsion modules \cite{maru} with pure embeddings as an abstract elementary class. We give a model-theoretic characterization of the pure-injective and the $\Sigma$-pure-injective modules relative to the class of torsion modules assuming that the torsion submodule is a pure submodule.  Our characterization of relative $\Sigma$-pure-injective modules extends the classical charactetization of \cite{gj} and \cite[3.6]{zimm}.

We study the limit models of the class and determine when the class is superstable assuming that the torsion submodule is a pure submodule. As a corollary, we show that the  class of torsion abelian groups with pure embeddings is strictly stable, i.e.,  stable not superstable.

\end{abstract}


\maketitle

{\let\thefootnote\relax\footnote{{AMS 2020 Subject Classification:
Primary: 20K10, 03C48. Secondary: 03C45, 03C60, 13L05.
Key words and phrases.  Torsion modules; Torsion abelian groups; Abstract Elementary
Classes; Superstability; Stability; Relative pure-injective modules.}}}


\section{Introduction}

Martsinkovsky-Russell torsion modules were introduced in \cite{maru} as a natural generalization of torsion modules to rings that are not necessarily commutative domains (Definition \ref{mdef}). We will denote them by \emph{$\s$-torsion modules} throughout this paper.  For a commutative domain, they are precisely the torsion modules, i.e., modules all of whose elements can be annihilated by a non-zero element of the ring.

For most rings the class of $\s$-torsion modules is not first-order axiomatizable in the language of modules. For example, it is folklore that the class of torsion abelian groups is not first-order axiomatizable. For this reason, we use non-elementary model-theoretic methods to analyse the class. More precisely, we will study the class of $\s$-torsion modules with pure embedding as an abstract elementary class (AEC for short, see Section 2.2 for more details). The class of $\s$-torsion modules with pure embedding is an abstract elementary class with amalgamation, joint embedding, and no maximal models. Moreover, it was shown in \cite[4.16]{maz2} that the class is stable. In this paper, assuming that the torsion submodule is a pure submodule, we study its class of limit models and use them to determine when the class is superstable. Recall that a \emph{limit model} is a universal model with some level of homogeneity (Definition \ref{10limit}) and an AEC is \emph{superstable} if there is a unique limit model up to isomorphims on a tail of cardinals (Definition \ref{defss}).\footnote{A detailed account of the development of the notion of superstability in AECs can be consulted in the introductions of \cite{grva} and \cite{m2}. }

A difficulty when trying to understand the class of $\s$-torsion modules is that the class might not be closed under pure-injective envelopes, see \cite[3.1]{m4} for the case of torsion abelian groups. Therefore, we begin by developing relative notions of pure-injectivity and $\Sigma$-pure-injectivity. The following result extends the  classical result of \cite{gj} and \cite[3.6]{zimm} where they characterize $\Sigma$-pure-injective modules (see Remark \ref{comp}).

\textbf{Lemma \ref{esigma}.}\textit{ Assume that $\s(N) \leq_p N$ for every module $N$ and $M$ is $\s$-torsion. 
$M$ is $\Sigma$-$\Ks$-pure-injective if and only if $M$ has the low-pp descending chain condition. }

The study of limit models for the class of $\s$-torsion modules and the characterization of superstability we obtain parallels that of previous results, \cite{m2}, \cite{m3} and \cite{maz2}, with the added difficulty that we have to deal with relative pure-injective modules instead of with pure-injective or cotorsion modules. More precisely, we obtain the following result.

\textbf{Theorem \ref{ss}.}\textit{ Assume that $\s(N) \leq_p N$ for every module $N$ and $R_R$ is not absolutely pure.  The following are equivalent.
\begin{enumerate}
\item The class of $\s$-torsion modules with pure embeddings is superstable.
\item There exists a $\lambda \geq (|R| + \aleph_0)^+$ such that the class of $\s$-torsion modules with pure embeddings has uniqueness of limit models of cardinality $\lambda$.
\item Every limit model in the class of $\s$-torsion modules with pure embeddings is $\Sigma$-$\Ks$-pure-injective.
\item Every $\s$-torsion module is $\Sigma$-$\Ks$-pure-injective.
\item Every $\s$-torsion module is $\Ks$-pure-injective.
\item For every $\lambda \geq |R| + \aleph_0$, the class of $\s$-torsion modules with pure embeddings has uniqueness of limit models of cardinality $\lambda$.
\item  For every $\lambda \geq |R| + \aleph_0$, the class of $\s$-torsion modules with pure embeddings is $\lambda$-stable.
\end{enumerate}}


The theorem allows us to show that certain classes are not superstable. In particular, we use our results to show that the class of torsion abelian groups with pure embeddings is strictly stable, i.e., stable not superstable. Determining if the class was superstable was the original objective of this paper.

\textbf{ Lemma \ref{sta}.} \textit{The class of torsion abelian groups with pure embeddings is $\lambda$-stable if and only if $\lambda^{\aleph_0}=\lambda$. Hence, it is strictly stable.}

This paper is part of a program to understand AECs of modules: \cite{maz20}, \cite{KuMaz}, \cite{m2}, \cite{m3}, \cite{m4}, \cite{maz2}. Other papers that have studied AECs of modules include:\cite{baldwine}, \cite{satr}, \cite{sh820}, \cite[\S 6]{bontor}\cite[\S 6]{lrv2}, \cite[\S 3]{lrvi}. 

The paper is divided into five sections. Section 2 has the preliminaries. Section 3 has new characterizations of relative pure-injective and $\Sigma$-pure-injective modules. Section 4 analyses the class of $\s$-torsion modules with pure embeddings as an abstract elementary class. Section 5 shows how to use the previous results to show that the class of torsion abelian groups with pure embeddings is strictly stable.

This paper was written while the author was working on a Ph.D. under the direction of Rami Grossberg at Carnegie Mellon University and I would like to thank Professor Grossberg for his guidance and assistance in my research in general and in this work in particular. I would also like to thank John T. Baldwin for asking me whether or not the class of torsion abelian groups with pure embeddings is superstable.  I thank Ivo Herzog for helpful conversations. I thank Hanif Cheung, Samson Leung, and Daniel Simson for many comments that helped improve the paper. I thank Mike Prest for letting me include Example \ref{E:1} in this paper. I am grateful to the referee for many comments that significantly improved the presentation of the paper.

\section{Preliminaries}

In this section we briefly present the basic notions of module theory and abstract elementary classes that we will use in this paper. The module theoretic preliminaries include the definition of the class of $\s$-torsion modules and assert some of its basic properties.

\subsection{Module theory} All rings considered in this paper are associative with unity. We write ${}_R M$ to specify that $M$ is a left $R$-module and $M_R$ to specify that $M$ is a right $R$-module.  When we simply write $M$, we assume that $M$ is a left $R$-module.

Given a ring $R$, $L_{R}= \{0, +,-\} \cup \{ r\cdot  : r \in R \}$ is the language of left $R$-modules where $r \cdot$ is interpreted as multiplication by $r$ for every $r \in R$. Recall that $\phi$ is a \emph{positive primitive formula}, $pp$-formula for short, if it is equivalent to an existentially quantified system of linear equations.  $M$ is a \emph{pure submodule} of $N$ if satisfaction of $pp$-formulas by elements from $M$ is the same whether considered in $M$ or in $N$ and we denote it by $M \leq_p N$.
The next family of $pp$-formulas was introduced in \cite{roth3}.

\begin{defin}\label{dlow}
A $pp$-formula $\psi(x)$ is \emph{low} if and only if $\psi[{}_R R]=0$. 
\end{defin}

\begin{remark}
It is easy to show that if $\psi_1(x), \psi_2(x)$ are low formulas and $r \in R$, then $\psi_1 + \psi_2(x):= \exists y \exists z ( \psi_1(y) \wedge \psi_2(z) \wedge x = y + z)$ and $r\psi_1(x):= \exists y( \psi(y)  \wedge x= ry)$ are low formulas.
\end{remark}

Given $\bb \in M^{<\omega}$ and $A \subseteq M$, the $pp$-type of $\bb$ over $A$ in $M$, denoted by $pp(\bb /A, M)$, is the set of all $pp$-formulas that hold for $\bb$ in $M$ with parameters in $A$.

As mentioned in the introduction, in this paper we will study the class of $\s$-torsion modules. These were introduced in \cite{maru} and studied from a model-theoretic perspective in \cite{maro} and \cite{roth2}. Below we present their model-theoretic definition.

\begin{defin}\label{mdef}
We say that $M$ is an \emph{$\s$-torsion module} if and only if for every $m \in M$ there is a low formula $\psi(x)$ such that $M \vDash \psi(m)$. We denote the class of $\s$-torsion modules by $\Ks$. 
\end{defin} 

\begin{remark}\label{R: tor} Let $R$ be a commutative domain. Recall that a module $M$ is an  $R$-torsion module if for every $m \in M$ there is an $r \neq 0 \in R$ such that $rm=0$. Denote the class of $R$-torsion modules by $\Kto$. It was shown in \cite[2.2]{maru} that $\Ks = \Kto$. In particular, the class of of $\s$-torsion abelian groups is precisely the class of torsion abelian groups. 
\end{remark} 

The following was introduced in \cite[2.1]{maru} and it also appears in a slightly different set up (under a different name) in \cite[5.3]{roth15}. The description we present will appear in the forthcoming paper \cite{maro}.

\begin{defin} For a left $R$-module $M$, let \[ \s(M) = \{ m \in M : M \vDash \psi(m) \text{ for some low formula } \psi \}. \]

\end{defin}

\begin{remark}\label{c-sum}\
\begin{itemize}
\item $M \in \Ks$ if and only if $\s(M)=M$.  
\item $\Ks$ is closed under pure submodules and direct sums.
\item (\cite[2.19]{maru}) $\s$ is a radical, i.e., for every $M, N$: $\s(M)$ is a submodule of $M$, if $f: M \to N$ then $f(\s(M)) \leq \s(N)$, and $\s(M/\s(M))=0$. 

\end{itemize}
\end{remark}
\begin{remark}
It is important to notice that in general $\s(\s(M))$ might be different from $\s(M)$, see \cite[p. 69]{maru}. For this reason, for arbitrary rings it might be the case that $\s(M)$ is \emph{not} an $\s$-torsion module.

\end{remark}

\subsection{Abstract elementary classes} We summarize the notions of abstract elementary classes that are used in this paper. A more detailed introduction to abstract elementary classes from an algebraic point of view is given in \cite[\S 2]{m4}. Given a model $M$, we write $|M|$ for its underlying set and $\| M \|$ for its cardinality. Abstract elementary classes were introduced by Shelah in \cite{sh88} to study those classes of structures that are axiomatizable in infinitary logics. An \emph{abstract elementary class} $\K$ is a pair $(K, \lea)$ where $K$ is a class of structures closed under isomorphisms and $\lea$ is a partial order on $K$. Additionally, the partial order on $K$ extends the substructure relation, $\K$ is closed under unions of chains, and there is a cardinal $\LS(\K)$ such that for any $M \in K$ and $A \subseteq |M|$, there is some $M_0 \lea M$ such that $A \subseteq |M_0|$ and $\|M_0\| \le |A| + \LS(\K)$. We consider only non-trivial AECs, i.e., AECs containing at least two non-isomorphic models. 

Throughout the paper, the term model refers to a structure in $K$.  For an infinite cardinal $\lambda$, we denote by $\K_\lambda$ the models in $\K$ of cardinality $\lambda$. If we write $f: M \to N$ for $M, N \in K$, we assume that $f$ is a $\K$-embedding unless specified otherwise. Recall that $f$ is a \emph{$\K$-embedding} if $f: M \cong f[M] \lea N$. Finally, for $M, N \in K$ and $A \subseteq M$, we write $f: M \xrightarrow[A]{} N$ if $f$ is a $\K$-embedding from $M$ to $N$ that fixes $A$ point-wise.

$\K$ has the \emph{amalgamation property} if for every $M, N_1, N_2$ in $K$ such that $M \lea N_1, N_2$ there are $N \in K$, $f: N_1 \to N$ and $g: N_2 \to N$ such that $f\rest_{M} = g\rest_{M}$. $\K$ has the \emph{joint embedding property} if every two models can be $\K$-embedded into a single model and $\K$ has \emph{no maximal models} if every model can be properly extended.

Shelah introduced a semantic notion of type in \cite{sh300}, we call them \emph{Galois-types} following \cite{grossberg2002}. Intuitively, a Galois-type over a model $M$ can be identified with an orbit of the group of automorphisms of the monster model which fixes $M$  point-wise. The full definition can be consulted in \cite[2.6]{m4}. We denote by $\gS(M)$ the set of all Galois-types over $M$ and we say that $\K$ is \emph{$(< \aleph_0)$-tame} if for every $M \in K$ and $p \neq q \in \gS(M)$, there is a finite subset $A$ of $M$ such that $p\rest_A \neq q\rest_A$.

\begin{defin}
 $\K$ is \emph{$\lambda$-stable} if $| \gS(M) | \leq \lambda$ for every $M \in \K_\lambda$. We say that $\K$ is \emph{stable} if $\K$ is $\lambda$-stable for some $\lambda \geq \LS(\K)$. 
\end{defin}

A model $M$ is \emph{universal over} $N$ if and only if $\| N\|= \| M\|=\lambda $ and for every $N^* \in \K_{\lambda}$ such that $N \lea N^*$, there is $f: N^* \xrightarrow[N]{} M$. 

\begin{defin}\label{10limit}
Let $\lambda$ be an infinite cardinal and $\alpha < \lambda^+$ be a limit ordinal.  $M$ is a \emph{$(\lambda,
\alpha)$-limit model over} $N$ if and only if there is $\{ M_i : i <
\alpha\}\subseteq \K_\lambda$ an increasing continuous chain such
that:
\begin{enumerate}
\item $M_0 =N$.
\item $M= \bigcup_{i < \alpha} M_i$.
\item $M_{i+1}$ is universal over $M_i$ for each $i <
\alpha$.
\end{enumerate}

$M$ is a 
\emph{$(\lambda, \alpha)$-limit model} if there is $N \in
\K_\lambda$ such that $M$ is a $(\lambda, \alpha)$-limit model over
$N$. $M$ is a \emph{$\lambda$-limit model} if there is a limit ordinal
$\alpha < \lambda^+$ such that $M$  is a $(\lambda,
\alpha)$-limit model. 
\end{defin}

\begin{fact}[{\cite[\S II]{shelahaecbook}, \cite[2.9]{tamenessone}}]\label{existence}
Let $\K$ be an AEC with joint embedding, amalgamation, and no maximal models. $\K$ is $\lambda$-stable if an only if $\K$ has a $\lambda$-limit model.
\end{fact}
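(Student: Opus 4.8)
The plan is to prove the two implications separately, entirely at the level of abstract elementary classes, using amalgamation (AP), joint embedding (JEP), no maximal models, and the Löwenheim–Skolem axiom built into the definition of an AEC; neither direction uses any module theory.

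\textbf{From stability to a limit model.} First I would establish the key construction: if $\K$ is $\lambda$-stable then every $N \in \K_\lambda$ admits a \emph{universal extension}, i.e. some $M \in \K_\lambda$ that is universal over $N$. To build it I would form an increasing continuous chain $\langle N_j : j < \lambda \rangle$ with $N_0 = N$, each $N_j \in \K_\lambda$, such that $N_{j+1}$ realizes every Galois-type in $\gS(N_j)$; this is possible precisely because $\lambda$-stability gives $|\gS(N_j)| \le \lambda$, so each type can be realized in an extension of size $\lambda$ (amalgamating the realizations by AP and then closing down by Löwenheim–Skolem). Setting $M = \bigcup_{j < \lambda} N_j$, I would verify universality over $N$ by a back-and-forth argument: given $N^\ast \in \K_\lambda$ with $N \lea N^\ast$, decompose $N^\ast$ as a continuous chain $N = N^\ast_0 \lea N^\ast_1 \lea \cdots$ adding one element at a time, and build $\K$-embeddings $f_k \colon N^\ast_k \to M$ fixing $N$, where at each successor stage the new element's Galois-type is pushed forward through $f_k$ and realized inside the next $N_j$ of the chain. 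Once universal extensions exist, a limit model is obtained by iterating: pick any $M_0 \in \K_\lambda$ and let $M_{i+1}$ be universal over $M_i$ for $i < \omega$; the union $\bigcup_{i < \omega} M_i$ is a $(\lambda, \omega)$-limit model in the sense of Definition \ref{10limit}.

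\textbf{From a limit model to stability.} Conversely, suppose $M = \bigcup_{i < \alpha} M_i$ is a $(\lambda, \alpha)$-limit model over $M_0$, with each $M_{i+1}$ universal over $M_i$. The naive observation is that universality of $M_{i+1}$ over $M_i$ forces every $p \in \gS(M_i)$ to be realized in $M_{i+1}$: realize $p$ in some size-$\lambda$ extension of $M_i$, embed that extension into $M_{i+1}$ over $M_i$, and transport the realization; hence $|\gS(M_i)| \le \|M_{i+1}\| = \lambda$. The difficulty, and the step I expect to be the crux, is that this only bounds the number of types over the particular models $M_i$ of this one chain, whereas $\lambda$-stability demands $|\gS(N)| \le \lambda$ for \emph{every} $N \in \K_\lambda$. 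To cross this gap I would use JEP and AP to pull an arbitrary $N$ into the chain: by JEP there is $P \in \K_\lambda$ with $M_0 \lea P$ and an embedding of $N$ into $P$, and universality of $M_1$ over $M_0$ yields $N \lea M_1$ up to isomorphism. Then I would show that universality of $M_2$ over $M_1$ upgrades to universality of $M_2$ over $N$: given any $N^\ast \in \K_\lambda$ extending $N$, amalgamate $N^\ast$ with $M_1$ over $N$ and embed the result into $M_2$ over $M_1$; as the second embedding fixes $M_1 \supseteq N$, the composite fixes $N$. Thus $N$ has a universal extension, so $|\gS(N)| \le \lambda$, and since $N$ was arbitrary $\K$ is $\lambda$-stable.

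Finally I would note that $\alpha$ being a limit ordinal guarantees $\alpha \ge 2$, so that $M_1$ and $M_2$ indeed exist, and that the genuinely nontrivial content is confined to the universal-extension construction in the forward direction and the JEP/AP transfer of universality in the backward direction; the remaining points (continuity of the chains, invariance of Galois-types under isomorphism, and the counting $|\gS(N)| \le \|M\|$ from the fact that each element of a universal extension $M$ realizes exactly one type over $N$) are routine.
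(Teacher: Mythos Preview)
The paper does not supply a proof of this statement; it is recorded as a \emph{Fact} with citations to \cite[\S II]{shelahaecbook} and \cite[2.9]{tamenessone} and is used as a black box thereafter, so there is no in-paper argument to compare against. Your outline is the standard proof found in those references and is correct: stability yields type-realizing extensions, iterating these along a chain of length $\lambda$ produces a universal extension (the back-and-forth you describe, carried out inside a monster model so that the partial maps extend to automorphisms and coherence guarantees the final image is a $\K$-substructure), and iterating universal extensions gives a limit model; conversely, a single pair $M_1 \lea M_2$ from the limit chain together with JEP and AP transfers universality to an arbitrary $N \in \K_\lambda$ exactly as you indicate.
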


A model is \emph{universal in $\K_\lambda$} if it has cardinality $\lambda$ and if every model in $\K$ of size $\lambda$ can be $\K$-embedded into it. It is known that every $\lambda$-limit model is universal in $\K_\lambda$ if $\K$ has the joint embedding property \cite[2.10]{maz20}.

We will also be interested in saturated models. Given $\lambda > \LS(\K)$ we say that $M$ is \emph{$\lambda$-saturated in $\K$} if every Galois-type over a $\K$-substructure of $M$ of size strictly less than $\lambda$ is realized in $M$. We have the following relation between saturated models and limit models.

\begin{fact}\label{sat} Let $\K$ be an AEC with joint embedding, amalgamation, and no maximal models.
If $M$ is a $(\lambda, \alpha)$-limit model and $\cof(\alpha) > \LS(\K)$, then $M$ is $\cof(\alpha)$-saturated in $\K$.
\end{fact}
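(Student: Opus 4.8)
The plan is to unwind the two definitions involved — "$(\lambda,\alpha)$-limit model" and "$\cof(\alpha)$-saturated" — and reduce everything to the defining clause of "universal over". Set $\mu = \cof(\alpha)$ and fix a witnessing chain $\langle M_i : i < \alpha\rangle \subseteq \K_\lambda$, increasing and continuous, with $M = \bigcup_{i<\alpha}M_i$ and $M_{i+1}$ universal over $M_i$ for every $i<\alpha$. Let $A \lea M$ with $\|A\| < \mu$, and let $p \in \gS(A)$; the goal is to realize $p$ inside $M$.

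First I would locate $A$ in the chain. Every element of $A$ lies in some $M_i$, and since $\|A\| < \mu = \cof(\alpha)$ the set of these indices is bounded in $\alpha$, so there is $i^* < \alpha$ with $A \subseteq M_{i^*}$ as sets. Because $A \lea M$ and $M_{i^*} \lea M$, the coherence axiom of AECs upgrades this to $A \lea M_{i^*}$. Next I would extend $p$ to $\gS(M_{i^*})$: picking a representative $p = \gtp(b/A; N)$ with $A \lea N$ and $b \in N$, I amalgamate $N$ with $M_{i^*}$ over $A$ and, after renaming, obtain $\bar p \in \gS(M_{i^*})$ with $\bar p\rest_A = p$. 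Using Löwenheim--Skolem together with coherence, I may shrink the ambient model so that $\bar p = \gtp(c/M_{i^*}; N^*)$ with $N^* \in \K_\lambda$, $M_{i^*} \lea N^*$, and $c \in N^*$.

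Now the defining property of limit models does the work: since $M_{i^*+1}$ is universal over $M_{i^*}$ and $\|M_{i^*}\| = \|M_{i^*+1}\| = \lambda = \|N^*\|$, there is a $\K$-embedding $f \colon N^* \xrightarrow[M_{i^*}]{} M_{i^*+1}$. As $f$ fixes $M_{i^*}$ pointwise, invariance of Galois-types under $\K$-embeddings gives that $f(c)$ realizes $\bar p$ in $M_{i^*+1}$, and since $M_{i^*+1} \lea M$ it realizes $\bar p$ in $M$ as well; restricting, $f(c)$ realizes $p$. Hence every Galois-type over a $\K$-substructure of $M$ of size $< \mu$ is realized in $M$, i.e., $M$ is $\cof(\alpha)$-saturated.

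The argument is essentially bookkeeping once the AEC axioms are in hand; the two points needing care are (i) the passage from $A \subseteq M_{i^*}$ to $A \lea M_{i^*}$ via coherence, and (ii) keeping all intermediate models at cardinality exactly $\lambda$ (via Löwenheim--Skolem) so that the "universal over $M_{i^*}$" clause literally applies. The main genuine step — and the only place amalgamation enters — is the extension of $p$ to $\bar p \in \gS(M_{i^*})$; everything after that is an application of universality plus invariance of Galois-types.
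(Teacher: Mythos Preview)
Your argument is correct and is the standard proof of this well-known fact; note that the paper itself states it as a \emph{Fact} with no proof given, treating it as folklore in the AEC literature. The steps you flag as needing care---coherence to upgrade $A \subseteq M_{i^*}$ to $A \lea M_{i^*}$, and L\"owenheim--Skolem to keep the ambient model at size $\lambda$ so that universality over $M_{i^*}$ applies---are exactly the right places to pay attention, and you handle them properly.
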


We say that $\K$ has \emph{uniqueness of limit models of cardinality $\lambda$} if $\K$ has $\lambda$-limit models and if any two $\lambda$-limit models are isomorphic. 

\begin{defin}\label{defss}
$\K$ is \emph{superstable} if and only if $\K$ has uniqueness of limit models on a tail of cardinals. 
\end{defin}

Superstability was first introduced for AECs in \cite{sh394} and further developed in \cite{grva} and \cite{vaseyt}. There it is shown that for AECs that have amalgamation, joint embedding, no maximal models, and $\LS(\K)$-tameness, the definition above is equivalent to any other definition of superstability introduced for  AECs. In particular, for a complete first-order theory $T$, $(Mod(T), \preceq)$ is superstable if and only if $T$ is superstable as a first-order theory\footnote{$T$ is superstable if and only if $T$ is $\lambda$-stable for every $\lambda \geq 2^{|T|}$.}. 

Finally, we say that $\K$ is \emph{strictly stable} if $\K$ is stable and not superstable.


\section{Relative pure-injective and $\Sigma$-pure-injective modules}

In this section we extend classical results of pure-injective and $\Sigma$-pure-injective modules to the class of $\s$-torsion modules and show how these can be extended to other similar settings (see Remark \ref{comp}). The arguments are similar to the standard arguments, but we provide them to show that they come through in this non-first-order setting.

We will assume the following hypothesis for the rest of this section.

\begin{hypothesis}\label{10hyp2}
 $\s(N) \leq_p N$ for every left $R$-module $N$. 
\end{hypothesis}
\begin{remark}\label{R: imp}
In \cite[2.17]{maru} it was shown that if $R$ is right semihereditary\footnote{$R$ is right semihereditary if every finitely generated right submodule of a projective module is projective}, then Hypothesis \ref{10hyp2} holds.  Therefore, a more natural algebraic hypothesis would have been to assume that $R$ is right semihereditary. The reason we decided to assume Hypothesis \ref{10hyp2} instead is because it is potentially a weaker hypothesis and to emphasize how our results can be extended to similar settings (see Remark \ref{comp}).  An interesting question is to determine if both statements are equivalent. In the case of commutative domains,  a commutative domain is semihereditary if and only if it is a a Pr\"{u}fer domain. In that case, it is known that both assertions are equivalent \cite[Ex. 4.36]{lam07}.
\end{remark}


The next proposition follows directly from Hypothesis \ref{10hyp2}, but we record it due to its importance.

\begin{prop}\label{idem} \
\begin{enumerate}
\item $\s$ is idempotent, i.e., $\s(\s(M))=\s(M)$ for every left $R$-module $M$.
\item $\s(M) \in \Ks$ for every left $R$-module $M$. 
\item If $M \leq_p N$, then $\s(M) \leq_p \s(N)$. 
\end{enumerate}
\end{prop}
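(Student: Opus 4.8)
The plan is to deduce both statements directly from Fact~\ref{semi}, which under Hypothesis~\ref{10hyp2} guarantees that $\s(M) \leq_p M$ for every left $R$-module $M$; this purity is precisely the ingredient that is missing in the general, non-semihereditary, situation discussed in the remark preceding the proposition.

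First I would prove (1). Fix a left $R$-module $M$. The inclusion $\s(\s(M)) \subseteq \s(M)$ is immediate from the definition of $\s$, since $\s(N) \subseteq N$ for any module $N$. For the reverse inclusion, let $m \in \s(M)$; by Definition~\ref{mdef} there is a low formula $\psi(x)$ with $M \vDash \psi[m]$. Since $\psi$ is a $pp$-formula, $m$ already lies in the submodule $\s(M)$, and $\s(M) \leq_p M$ by Fact~\ref{semi}, purity yields $\s(M) \vDash \psi[m]$. As $\psi$ is low, this exhibits $m \in \s(\s(M))$. Hence $\s(M) = \s(\s(M))$, i.e.\ $\s$ is idempotent.

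Statement (2) then follows formally: by the first item of Remark~\ref{c-sum}, a module $N$ belongs to $\Ks$ if and only if $\s(N) = N$; applying this with $N = \s(M)$ and invoking (1) gives $\s(M) \in \Ks$.

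I do not expect any genuine obstacle once Fact~\ref{semi} is available — the argument is a one-line consequence of purity. The only point worth emphasizing is that it uses the nontrivial direction of $M\leq_p N$ (namely, that a $pp$-formula satisfied in $M$ by a tuple from $\s(M)$ is already satisfied in $\s(M)$), and that this is exactly the step that breaks down when $\s(M)$ fails to be pure in $M$, in which case $\s(M)$ need not be $\s$-torsion.
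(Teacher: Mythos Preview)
Your argument is correct and is exactly the direct deduction from Fact~\ref{semi} that the paper has in mind; the paper does not spell out a proof but simply records that the proposition follows immediately from the purity of $\s(M)$ in $M$. Your write-up makes explicit the one nontrivial step---that $M \vDash \psi[m]$ with $m \in \s(M)$ and $\s(M) \leq_p M$ forces $\s(M) \vDash \psi[m]$---and then (2) is, as you observe, a formal consequence of (1) via Remark~\ref{c-sum}.
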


Recall that we are assuming that $\Ks$ is non-trivial. In \cite[2.32]{maru} it was shown, assuming Hypothesis \ref{10hyp2}, that $\Ks$ is non-trivial if and only if $R_R$ is not absolutely pure\footnote{$M_R$ is absolutely pure if for every $N_R$, if $M_R \subseteq_R N_R$ then $M_R \leq_p N_R$}.

A module $M$ is \emph{pure-injective} if for every $N_1, N_2$, if $N_1 \leq_p N_2$ and $f: N_1 \to M$  is a homomorphism then there is a homomorphism $g: N_2 \to M$ extending $f$.  Given a module $M$, \emph{the pure-injective envelope of $M$}, denoted by $PE(M)$, is a pure-injective module with $M \leq_p PE(M)$ and it is minimum with respect to these properties \cite[\S 3]{ziegler}, i.e., if $N$ is pure-injective and $M \leq_p N$ then there is a pure embedding $f: PE(M) \xrightarrow[M]{} N$. 

Let us recall the following notion and assertion.

\begin{defin}[{\cite[p. 145]{prest09}}]
Let $M \leq_p N$. $M$ is a \emph{pure-essential submodule} of $N$, denoted by $M \leq^e N$, if and only if for every homomorphism $f: N \to N'$, if $f \circ i$ is a pure embedding where $i: M \hookrightarrow N$ is the inclusion, then $f$ is a pure embedding.
\end{defin}

\begin{fact}[{\cite[4.3.15, 4.3.16]{prest09}}]\label{esse}\
\begin{enumerate}
\item If $M \leq_p N_1 \leq_p N_2$ and $M \leq^e N_2$, then $M\leq^e N_1$. 
\item  $M \leq^e PE(M)$.
\end{enumerate}
\end{fact}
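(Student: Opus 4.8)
The plan is to treat the two parts separately, both via routine manipulation of $pp$-types; I will use repeatedly that if $g$ and $g\circ f$ are pure embeddings then so is $f$ (since $pp(f(b),\cdot)=pp(gf(b),\cdot)=pp(b,\cdot)$ for all $b$), and that the pushout of a pure embedding along an arbitrary homomorphism is again a pure embedding.

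For (1): given $M \leq_p N_1 \leq_p N_2$, $M \leq^e N_2$, and a homomorphism $f\colon N_1 \to N'$ with $f\rest_M$ a pure embedding, I would form the pushout $P$ of $N' \xleftarrow{f} N_1 \hookrightarrow N_2$, with legs $g\colon N' \to P$ and $f'\colon N_2 \to P$. Then $g$ is a pure embedding, hence $f'\rest_M = g\circ (f\rest_M)$ is a pure embedding; since $M \leq^e N_2$, it follows that $f'$ is a pure embedding; restricting to the pure submodule $N_1$, $g\circ f = f'\rest_{N_1}$ is a pure embedding; and then $f$ is a pure embedding. Hence $M \leq^e N_1$.

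For (2): write $H=PE(M)$ and take $f\colon H \to N'$ with $f\rest_M$ pure. First I would compose with the pure embedding $N'\hookrightarrow PE(N')$ to reduce to the case where $N'$ is pure-injective. Then $f\rest_M\colon M \to N'$ is a pure embedding into a pure-injective module, so the minimality property of $PE(M)$ yields a pure embedding $g\colon H \to N'$ with $g\rest_M=f\rest_M$. Since $g(H)$ is pure in $N'$ and pure-injective, it is a direct summand, $N'=g(H)\oplus C$; writing $\pi$ for the projection onto $g(H)$, the map $\varphi:=g^{-1}\circ\pi\circ f\colon H \to H$ fixes $M$ pointwise. Granting that $\varphi$ is an automorphism of $H$, for each $b\in H$ we have $\pi f(b)=g\varphi(b)$, so $pp(f(b),N')\subseteq pp(\pi f(b),g(H))=pp(\varphi(b),H)=pp(b,H)$ (the middle equality because $g$ is an isomorphism onto $g(H)$, the last because $\varphi$ is an automorphism and $pp$-formulas carry no parameters); together with the automatic reverse inclusion this makes $f$ a pure embedding, with injectivity following from that of $\varphi$. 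Hence $M\leq^e PE(M)$.

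The main obstacle is the claim that an endomorphism $\varphi$ of $PE(M)$ fixing $M$ pointwise must be an automorphism — equivalently, that $PE(M)$ has no proper direct summand containing $M$. This is precisely where the argument stops being formal and must call on the structure theory of pure-injective modules (semiregularity of their endomorphism rings, lifting of idempotents modulo the Jacobson radical); alternatively, one may simply take ``$M\leq_p H$ with $H$ pure-injective and $M\leq^e H$'' as the definition of the pure-injective hull and prove its equivalence with the minimality property used above. Either way this is the genuinely non-formal ingredient, which is why in the paper we are content to quote \cite[4.3.15, 4.3.16]{prest09}.
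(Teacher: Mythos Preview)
The paper does not prove this statement at all: it is recorded as a \emph{Fact} with a bare citation to \cite[4.3.15, 4.3.16]{prest09}, so there is nothing to compare against on the paper's side.

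Your argument for (1) is correct. The one step worth flagging is the claim that the pushout of a pure embedding along an arbitrary homomorphism is again a pure embedding; this is true in any module category (pure monomorphisms are filtered colimits of split monomorphisms, and pushout preserves both), but it is worth saying explicitly since it is the only non-formal input.

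Your argument for (2) is also sound, and you have correctly isolated the genuine content: the assertion that every endomorphism of $PE(M)$ fixing $M$ pointwise is an automorphism (equivalently, that $PE(M)$ has no proper direct summand containing $M$). This is exactly the substance of Prest's proof as well; it is not circular with (2) provided one extracts it from the structure of $\operatorname{End}(PE(M))$ (its quotient by the Jacobson radical is von~Neumann regular and idempotents lift), which is what \cite[4.3]{prest09} does. Once that lemma is in hand, your computation $pp(f(b),N')\subseteq pp(\pi f(b),g(H))=pp(\varphi(b),H)=pp(b,H)$ is correct and finishes the job. So your write-up is an honest and accurate expansion of what the cited reference actually proves.
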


We now introduce a relative notion of pure injectivity and saturation.

\begin{defin} Let $M$ be an $\s$-torsion module.
\begin{itemize}
\item $M$ is \emph{$\Ks$-pure-injective} if and only if for every $N_1, N_2 \in \Ks$, if $N_1 \leq_p N_2$ and $f: N_1 \to M$ is a homomorphism then there is a homomorphism $g: N_2 \to M$ extending $f$.
\item $M$ is \emph{$pp$-saturated in $N$} if and only if $M \leq_p N$ and if every $pp$-type over $M$ which is realized in $N$ is realized in $M$. 
\end{itemize}
\end{defin}

The following notion of partial homomorphism will also be useful and is due to \cite[\S 3]{ziegler}.

\begin{defin}
For two modules $M, N$, $A \subseteq |M|$ and $B \subseteq |N|$. A function $f: A \to B$ is a \emph{partial homomorphism from $M$ to $N$} if and only if for every $\bar{a} \in A$ and $\phi(\bar{x})$ $pp$-formula:

\[ M \vDash \phi(\bar{a}) \Rightarrow N \vDash \phi(f(\bar{a})).\]

\end{defin}

Observe that if $f: M \to N$ is a homomorphism then $f$ is a partial homomorphism from $M$ to $N$ as $pp$-formulas are preserved under homomorphism.


We now prove several equivalences of $\Ks$-pure-injectivity. These extend classical characterizations of pure-injectivity, see Remark \ref{comp} and the detailed history presented right before Theorem 2.8 of \cite{prest}.

\begin{theorem}\label{mod} Assume $M$ is an $\s$-torsion module.
The following are equivalent.
\begin{enumerate}
\item $M$ is $pp$-saturated in $N$ for every $N \in \Ks$.
\item $M$ is $\Ks$-pure-injective. 
\item $M=\s(PE(M))$
\item $M = \s(N)$ for some pure-injective module $N$. 
\item If $M \leq_ p M^*$ and $M^* \in \Ks$, then $M$ is a direct summand of $M^*$.
\end{enumerate}
\end{theorem}
\begin{proof}
$(1) \Rightarrow (2)$: Let $N_1 \leq_p N_2$ and $f: N_1 \to M$ be a homomorphism. Let \[\mathcal{P} = \{ g :  f\subseteq g \text{ and } g \text{ is a partial homomorphism from } N_2 \text{ to } M \}. \] It is clear that one can apply Zorn's lemma to $\mathcal{P}$, so let $g: A \to M$ be a maximal partial homomorphism from $N_2$ to $M$  extending $f$. We show that $A = N_2$. Let $b \in N_2$ and $p = pp(b/A, N_2)$. Consider $q(x)=\{ \phi(x, g(\bar{a})) : \phi(x, \bar{a}) \in p\}$. Clearly $q(x)$ is a $Th(M)$-type over $g[A]$ so there is $M^*$ an elementary extension of $M$ and $m^*\in M^*$ such that $q(x) \subseteq pp(m^*/M, M^*)$. 

Since $N_2 \in \Ks$, there is $\psi$ low such that $N_2 \vDash \psi(b)$. Hence $\psi \in q(x)$ and $m^* \in \s(M^*)$. Let $q'(x) = pp(m^*/M, \s(M^*))$. Then by (1), there is $m \in M$ realizing $q'(x)$ and it is clear that $g \cup \{ (b, m) \}$ is a partial homomorphism from $N_2$ to $M$  extending $f$. So by maximality of $g$ we have that  $b \in A$.

$(2) \Rightarrow (3)$: Let $N_1=M$, $N_2=\s(PE(M))$ and $f=\id_M$. Then by (2) there is a $g: \s(PE(M)) \to M$ extending $f$. Observe that by  Fact \ref{esse} $M \leq^e \s(PE(M))$ as $M \leq_p \s(PE(M)) \leq_p PE(M)$ and $M \leq^e PE(M)$. Then it follows that $g$ is a pure embedding.  We show that $\s(PE(M)) \subseteq M$. Let $a \in \s(PE(M))$ then $g(g(a))=g(a)$ as $g$ extends $f$. Since $g$ is injective $g(a)=a$, then $a = g(a) \in M$. Hence  $\s(PE(M)) = M$.

$(3) \Rightarrow (4)$: Clear.

$(4) \Rightarrow (5)$: Let $M \leq_ p M^*$ and $M^* \in \Ks$. Then by (4) we have that $M = \s(N) \leq_p N$ for $N$ a  pure-injective module. Since $N$ is pure-injective, there is a homomorphism $g: M^* \to N$ with $g\rest_M=\id_M$. One can check that $M^*= M \oplus ker(g)$ using that $g[M^*] \subseteq \s(N)=M$.

$(5) \Rightarrow (1)$: Let $M \leq_p N \in \Ks$ and $p=pp(a/M, N)$ for some $a \in N$. Then by (5) there is $L$ such that $N=M \oplus L$.  Let $\pi_1: N=M \oplus L \to M$ be the projection onto the first coordinate. Clearly $\pi_1 (a) \in M$ realizes $p$.
\end{proof}

A standard argument proves the following proposition.

\begin{prop}\label{10add} Assume $M$ and $N$ are $\s$-torsion modules. If $M$ and $N$ are $\Ks$-pure-injective, then $M \oplus N$ is $\Ks$-pure-injective
\end{prop}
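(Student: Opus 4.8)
The plan is to reduce $\Ks$-pure-injectivity of a direct sum to the characterization given in Lemma \ref{mod}, specifically the equivalence of (2) with (1), i.e., relative $pp$-saturation in every member of $\Ks$. So first I would recall that, by Lemma \ref{mod}, it suffices to show that $M \oplus N$ is relatively $pp$-saturated in every $N^* \in \Ks$ with $M \oplus N \leq_p N^*$. Equivalently, and more directly, one can just verify property (2) of the definition by hand: given $N_1 \leq_p N_2$ in $\Ks$ and a homomorphism $f : N_1 \to M \oplus N$, compose with the two projections $\pi_M : M \oplus N \to M$ and $\pi_N : M \oplus N \to N$ to get $f_M = \pi_M \circ f : N_1 \to M$ and $f_N = \pi_N \circ f : N_1 \to N$. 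Since $M$ and $N$ are each $\Ks$-pure-injective, extend $f_M$ to $g_M : N_2 \to M$ and $f_N$ to $g_N : N_2 \to N$, and then set $g = (g_M, g_N) : N_2 \to M \oplus N$. One checks $g \rest_{N_1} = f$ since both coordinates agree with $f$ after projecting, and this is the desired extension.

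The verification that $g$ extends $f$ is the only thing to be careful about: for $x \in N_1$ we have $g(x) = (g_M(x), g_N(x)) = (f_M(x), f_N(x)) = (\pi_M f(x), \pi_N f(x)) = f(x)$, using that $f(x) \in M \oplus N$ is determined by its two coordinates. That $g$ is a homomorphism is immediate from $g_M, g_N$ being homomorphisms. So in fact no subtlety arises here at all — this really is ``the standard argument.''

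I do not anticipate a genuine obstacle. The one thing worth a sentence is that this argument does not even need the relative framework — it works verbatim for ordinary pure-injectivity — and the only place the $\s$-torsion hypothesis enters is in ensuring $M \oplus N \in \Ks$ (so that the statement is not vacuous) and in matching the setup of the definition of $\Ks$-pure-injectivity, which quantifies over $N_1, N_2 \in \Ks$; that closure of $\Ks$ under direct sums is recorded in Remark \ref{c-sum}. One could alternatively phrase the proof through clause (4) of Lemma \ref{mod}: if $M = \s(P)$ and $N = \s(Q)$ with $P, Q$ pure-injective, then $P \oplus Q$ is pure-injective and $\s(P \oplus Q) = \s(P) \oplus \s(Q) = M \oplus N$ since $\s$ commutes with finite direct sums (as $\s(M)$ is functorial and additive by Remark \ref{c-sum}), giving the conclusion via (4) $\Rightarrow$ (2). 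I would present the first, elementary argument, as it is shortest and most transparent, and leave the Lemma \ref{mod} route as a remark if space permits.
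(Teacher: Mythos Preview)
Your proposal is correct and is precisely what the paper intends: the paper omits the proof entirely, stating only that ``the standard argument can be used,'' and your componentwise extension via the projections $\pi_M, \pi_N$ is exactly that standard argument. Your alternative route through Lemma~\ref{mod}(4) is also fine, but the direct verification of the defining property is the cleanest choice here.
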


We turn our attention to $\Sigma$-$\Ks$-pure-injective modules.

\begin{defin} Let $M$ be an $\s$-torsion module.
$M$ is \emph{$\Sigma$-$\Ks$-pure-injective} if and only if $M^{(I)}$ is $\Ks$-pure-injective for every index set $I$ where $M^{(I)}$ denotes the direct sum of $M$ indexed by $I$. 
\end{defin}

 \begin{defin}\label{d-chain} Let $M$ be an $\s$-torsion module. $M$ has \emph{the low-pp descending chain condition} if and only if  for every $\{ \phi_n(x) \}_{n \in \omega}$ such that $\phi_0(x)$ is low and $\phi_n(x)$ is a $pp$-formula for every $n \in \omega$, if $\{ \phi_n[M] \}_{n \in \omega}$  is a descending chain in $M$, then there exists $n_0 \in \omega$ such that $\phi_{n_0}[M]=\phi_k[M]$ for every $k \geq n_0$.

 \end{defin}
 
 We will soon see that the previous notion is actually equivalent to being $\Sigma$-$\Ks$-pure-injective

\begin{lemma}\label{pi-chain} Let $M$ be an $\s$-torsion module.
If $M$ has the low-pp descending chain condition, then $M$ is $\Ks$-pure-injective.   
\end{lemma}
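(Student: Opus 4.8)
The plan is to reduce to relative $pp$-saturation. By the equivalence $(1)\Leftrightarrow(2)$ in Lemma \ref{mod}, it suffices to show that $M$ is relatively $pp$-saturated in $N$ for every $N\in\Ks$ with $M\leq_p N$. So fix such an $N$, an element $a\in N$, and put $p=pp(a/M,N)$; the goal is to find $b\in M$ realizing $p$. Since $N$ is $\s$-torsion there is a low formula $\phi_0(x)$ with $N\vDash\phi_0[a]$, and since $\phi_0$ has no parameters, $\phi_0(x)\in p$.

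The heart of the matter is to use the low-pp descending chain condition to extract from $p$ a single ``smallest'' formula. I would look at the subgroups of $M$ that arise as the solution set in $M$ of $\phi_0(x)\wedge\bigwedge_{i\le k}\psi_i(x,\bar0)$, as $k$ and $\psi_1(x,\bar m_1),\dots,\psi_k(x,\bar m_k)\in p$ vary (here $\bar0$ means substituting $0$ for the parameters). Each such subgroup is $\theta[M]$ for a low $pp$-formula $\theta$ — low because $\phi_0$ is low and conjoining a low formula with anything stays low (remark after Definition \ref{dlow}) — and the family is closed under finite intersection, hence downward directed. If it had no least element one could build a strictly descending $\omega$-chain of subgroups of this form, contradicting Definition \ref{d-chain}. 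So there is a least one, realized by some $\phi_*(x):=\phi_0(x)\wedge\bigwedge_{i\le k}\psi_i(x,\bar0)$; putting the parameters back, $\phi_*(x,\bar m_*):=\phi_0(x)\wedge\bigwedge_{i\le k}\psi_i(x,\bar m_i)$ lies in $p$, and the solution set of $\phi_*(x,\bar0)$ in $M$ is contained in that of $\psi(x,\bar0)$ for every $\psi(x,\bar m)\in p$.

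Finally I would pick $b\in M$ with $M\vDash\phi_*[b,\bar m_*]$ — possible since $\exists x\,\phi_*(x,\bar m_*)$ holds in $N$ and descends to $M$ by purity — and verify that $b$ realizes $p$. Given $\psi(x,\bar m)\in p$, the formula $\phi_*\wedge\psi$ is again in $p$; by the minimality above, $\phi_*(x,\bar0)$ and $(\phi_*\wedge\psi)(x,\bar0)$ have the same solution set $H$ in $M$, while the solution set of $(\phi_*\wedge\psi)(x,\bar m_*,\bar m)$ in $M$ is a nonempty coset of $H$ contained in the coset of $H$ given by the solution set of $\phi_*(x,\bar m_*)$; hence these two cosets coincide, and since $b$ lies in the latter it lies in the former, so $M\vDash\psi[b,\bar m]$. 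As $\psi\in p$ was arbitrary, $b$ realizes $p$.

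The point to be careful about is that the chain condition holds only in $M$, not in the (possibly much larger) extension $N$: one cannot simply intersect the cosets containing $a$ inside $N$, and must run both the minimization and the final coset argument entirely inside $M$, using repeatedly that $pp$-formulas and the solvability of $pp$-formulas with parameters in $M$ transfer between $M$ and $N$ since $M\leq_p N$, together with the triviality that a coset of a subgroup contained in another coset of the same subgroup is equal to it. A minor point is that only single-variable relative $pp$-saturation is needed, which is exactly what the proof of $(1)\Rightarrow(2)$ of Lemma \ref{mod} uses.
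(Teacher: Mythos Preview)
Your proof is correct and follows the same strategy as the paper's: reduce to relative $pp$-saturation via Lemma \ref{mod}, then use the low-pp descending chain condition (anchored by a low formula in $p$) to find a single formula $\phi\in p$ whose solution set in $M$ is contained in that of every $\psi\in p$, and finally realize $\phi$ in $M$ by purity. The paper compresses your steps 4--7 into two sentences while you spell out the subgroup/coset translation explicitly; one tiny quibble is that the remark after Definition \ref{dlow} treats sums and scalars rather than conjunctions, but closure of low formulas under conjunction with arbitrary $pp$-formulas is immediate from the definition.
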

\begin{proof}
Let $p=pp(b/M, N)$ for some $N \in \Ks$ with $b \in N$ and $M \leq_p N$. It is enough to show that there is a $\phi(x) \in p$ such that for every $\psi(x) \in p$ and $c \in M$, $M \vDash \phi(c) \to \psi(c)$. Such a $\phi$ exists by the hypothesis on $M$ and the fact that there is a low formula $\theta \in p$ as $N$ is an $\s$-torsion module. 
\end{proof}

The next result extends a classic characterization of $\Sigma$-pure-injectivity, see \cite[2.11]{prest} and Remark \ref{comp}.

\begin{lemma}\label{esigma}
Let $M$ be an $\s$-torsion module. The following are equivalent.
\begin{enumerate}
\item $M$ is $\Sigma$-$\Ks$-pure-injective.
\item $M^{(\aleph_0)}$ is $\Ks$-pure-injective.
\item $M$ has the low-pp descending chain condition.
\end{enumerate}
\end{lemma}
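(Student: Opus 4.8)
The structure of the argument is the usual three-way cycle $(1)\Rightarrow(2)\Rightarrow(3)\Rightarrow(1)$, mimicking the classical proof that $\Sigma$-pure-injectivity is equivalent to the descending chain condition on pp-definable subgroups (Prest \cite[2.11]{prest}), but carried out relative to the class $\Ks$ and with low pp-formulas playing the role of the "anchor" formula that guarantees we stay inside the torsion world. The implication $(1)\Rightarrow(2)$ is immediate from the definition of $\Sigma$-$\Ks$-pure-injectivity, taking $I=\aleph_0$. The implication $(3)\Rightarrow(1)$ is where Lemma \ref{pi-chain} does the heavy lifting: I would first observe that the low-pp descending chain condition passes to arbitrary direct powers — indeed direct sums — i.e. if $M$ has it then so does $M^{(I)}$ for every $I$. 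This should follow because a pp-formula $\phi$ evaluated in $M^{(I)}$ is the "coordinatewise" version of $\phi[M]$ (more precisely $\phi[M^{(I)}]$ is controlled by $\phi[M]$ together with the low formula that pins down which coordinates are nonzero, using that $\phi_0$ is low and that any element of a torsion module is moved into $M$ on finitely many coordinates), so a strictly descending chain of low-pp-definable subgroups in $M^{(I)}$ would produce one in $M$. Once the low-pp d.c.c. is inherited by $M^{(\aleph_0)}$ and by every $M^{(I)}$, Lemma \ref{pi-chain} gives that each $M^{(I)}$ is $\Ks$-pure-injective, which is exactly $(1)$.

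The real content, and the step I expect to be the main obstacle, is $(2)\Rightarrow(3)$: from $M^{(\aleph_0)}$ being $\Ks$-pure-injective, derive the low-pp descending chain condition on $M$. I would argue by contraposition. Suppose $\{\phi_n(x)\}_{n\in\omega}$ with $\phi_0$ low, all $\phi_n$ pp, and $\phi_0[M]\supseteq \phi_1[M]\supseteq\cdots$ strictly descending. The plan is to build an element $b$ in a pure extension $N\in\Ks$ of $M^{(\aleph_0)}$ whose pp-type over $M^{(\aleph_0)}$ is not realized in $M^{(\aleph_0)}$, contradicting $\Ks$-pure-injectivity via Lemma \ref{mod}(1)$\Leftrightarrow$(2). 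Concretely, pick $a_n\in\phi_n[M]\setminus\phi_{n+1}[M]$ and consider the element $\bar a=(a_0,a_1,a_2,\dots)\in M^{\omega}$; this does not lie in $M^{(\aleph_0)}$, but I want to realize the "type of $\bar a$ relative to the $\phi_n$'s" over $M^{(\aleph_0)}$ inside a suitable $\Ks$-extension. The standard trick: consider the type $p(x)$ over $M^{(\aleph_0)}$ saying $\phi_n(x - s_n)$ for appropriate finite partial sums $s_n$ of $\bar a$ (a Cauchy-type condition), together with the low formula $\phi_0(x)$ to keep things torsion. One shows $p$ is finitely satisfiable in $M^{(\aleph_0)}$ (any finite fragment is satisfied by a partial sum), hence realized in some pure extension; then since $\phi_0$ is low, that extension — or rather its $\s$-part, which is pure by Fact \ref{semi} and lies in $\Ks$ by Proposition \ref{idem} — contains the realization; and finally, a realization of $p$ in $M^{(\aleph_0)}$ itself would force the chain $\phi_n[M]$ to stabilize, contradiction.

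The delicate points to get right are: (a) verifying that the element witnessing non-stabilization genuinely produces a pp-type over $M^{(\aleph_0)}$ that is omitted — this is where one must be careful that the parameters $s_n$ live in $M^{(\aleph_0)}$ (finite support!) and that the descending chain being \emph{strict} is what blocks realization; (b) ensuring at every stage we remain inside $\Ks$, which is handled uniformly by the presence of the low formula $\phi_0$ in the type together with Fact \ref{semi}, Proposition \ref{idem}, and the fact that pure submodules and direct sums of $\s$-torsion modules are $\s$-torsion (Remark \ref{c-sum}); and (c) for $(3)\Rightarrow(1)$, the bookkeeping that identifies $\phi[M^{(I)}]$ in terms of data over $M$ so that the d.c.c. transfers. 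I would present (a) and (b) in full since they are the crux, and treat (c) and the trivial $(1)\Rightarrow(2)$ briefly. The use of Lemma \ref{mod} to translate "$\Ks$-pure-injective" into "relatively $pp$-saturated in every $N\in\Ks$" is what makes the contrapositive argument in $(2)\Rightarrow(3)$ clean, so I would invoke it explicitly at the start of that implication.
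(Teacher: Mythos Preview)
Your proposal is correct and follows essentially the same route as the paper: the same cycle $(1)\Rightarrow(2)\Rightarrow(3)\Rightarrow(1)$, with $(2)\Rightarrow(3)$ proved by contraposition via the partial-sum type $\{\phi_n(x-s_n)\}\cup\{\phi_0(x)\}$ realized in $\s(M^*)$ for some elementary extension $M^*$ and then pulled back to $M^{(\aleph_0)}$ using Lemma~\ref{mod}(1), and $(3)\Rightarrow(1)$ via the identity $\phi[M^{(I)}]=\phi[M]^{(I)}$ together with Lemma~\ref{pi-chain}. Your remark in point (c) about needing the low formula to ``pin down which coordinates are nonzero'' is unnecessary---the identity $\phi[N^{(I)}]=\phi[N]^{(I)}$ holds for arbitrary pp-formulas and modules, so the transfer of the descending chain condition is immediate.
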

\begin{proof}
$(1) \Rightarrow (2)$: Clear.

$(2) \Rightarrow (3)$: Assume for the sake of contradiction that there is a family of $pp$-formulas $\{ \phi_n(x) \}_{n \in \omega}$ such that $\phi_0(x)$ is low and $\phi_n[M] \supset \phi_{n+1}[M]$ for every $n \in \omega$. For each $n \in \omega$ pick $a_n \in \phi_{n}[M] \backslash \phi_{n + 1}[M]$ and set $b_n=(a_0, a_1, \cdots , a_{n-1}, 0, \cdots ) \in M^{(\aleph_0)}$. 

Let $p(x)=\{ \phi_n(x - b_n) : n \geq 1 \} \cup \{ \phi_0(x) \}$. Realize that $p(x)$ is a $Th(M^{(\aleph_0)})$-type so there is $M^* \succeq M^{(\aleph_0)}$ and $c \in M^*$ realizing $p(x)$. Observe that $c \in \s(M^*)$ and that $M^{(\aleph_0)}$ is pure in $\s(M^*)$ because $\s(M^{(\aleph_0)})=M^{(\aleph_0)}$ and $\s(M^*)$ is pure in $M^*$. Then by hypothesis and Theorem \ref{mod}.(1) there is $d \in M^{(\aleph_0)}$  realizing $p(x)$. Then one can show that $M \vDash \phi_{m + 1}[a_m]$ for some $m \in \omega$, contradicting the choice of $a_m$.

$(3) \Rightarrow (1)$: As $pp$-formulas commute with direct sums, $M^{(I)}$ has the low-pp descending chain condition.  Since $M^{(I)} \in \Ks$, it follows that $M^{(I)}$ is $\Ks$-pure-injective by Lemma \ref{pi-chain}. \end{proof}

\begin{cor}\label{sigma}
Let $M$ and $N$ be $\s$-torsion modules .
\begin{itemize}
\item If $N$ is $\Sigma$-$\Ks$-pure-injective, then $N$ is $\Ks$-pure-injective.
\item If $M \leq_p N$ and $N$ is $\Sigma$-$\Ks$-pure-injective, then $M$ is $\Sigma$-$\Ks$-pure-injective.
\item If $M$ is elementarily equivalent to $N$ and $N$ is $\Sigma$-$\Ks$-pure-injective, then $M$ is $\Sigma$-$\Ks$-pure-injective.
\end{itemize}
\end{cor}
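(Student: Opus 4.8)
The plan is to derive all three bullet points of Corollary \ref{sigma} from Lemma \ref{esigma}, using the equivalence of $\Sigma$-$\Ks$-pure-injectivity with the low-pp descending chain condition, since that latter condition is purely a statement about the lattice of $pp$-definable subgroups and transfers easily along the maps in question.

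For the first bullet, if $N$ is $\Sigma$-$\Ks$-pure-injective then in particular $N^{(I)}$ is $\Ks$-pure-injective for every $I$; taking $I$ a singleton gives that $N$ itself is $\Ks$-pure-injective. (Alternatively, this is immediate from the implication $(1)\Rightarrow(2)\Rightarrow(3)$ of Lemma \ref{esigma} together with Lemma \ref{pi-chain}.) For the second bullet, suppose $M \leq_p N$ with $N$ $\Sigma$-$\Ks$-pure-injective. First note $M \in \Ks$ since $\Ks$ is closed under pure submodules (Remark \ref{c-sum}), so it makes sense to ask whether $M$ is $\Sigma$-$\Ks$-pure-injective, and by Lemma \ref{esigma} it suffices to check that $M$ has the low-pp descending chain condition. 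Given $\{\phi_n(x)\}_{n\in\omega}$ with $\phi_0$ low and $\{\phi_n[M]\}_n$ descending, one uses purity of $M$ in $N$: since $M \leq_p N$, for each $n$ we have $\phi_n[M] = \phi_n[N] \cap M$ (purity says exactly that the $pp$-formulas realized by a tuple of $M$ are the same computed in $M$ or in $N$). Hence $\{\phi_n[N]\}_n$ is a descending chain in $N$ with $\phi_0$ low, so by Lemma \ref{esigma} applied to $N$ it stabilizes, say at $n_0$; intersecting with $M$ shows $\{\phi_n[M]\}_n$ stabilizes at $n_0$ as well. For the third bullet, suppose $M \equiv N$ (elementarily equivalent) and $N$ is $\Sigma$-$\Ks$-pure-injective. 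Again by Lemma \ref{esigma} it is enough to show $M$ has the low-pp descending chain condition (and to note $M \in \Ks$, which follows since ``being $\s$-torsion" — every element satisfying some low formula — is preserved under elementary equivalence once we observe it is part of the hypothesis, or simply because $M$ is given to be an $\s$-torsion module). The point is that the descending chain condition at the level of $pp$-definable subgroups is a first-order property of the parameters involved: for a fixed finite sequence $\phi_0,\dots,\phi_n$ of $pp$-formulas with $\phi_0$ low, the statement ``$\phi_n[x] \subseteq \phi_{n-1}[x] \subseteq \cdots \subseteq \phi_0[x]$ and $\phi_n[x] \neq \phi_{n-1}[x]$'' (i.e. proper at the last step) is expressible by a single first-order sentence, so if it fails in $N$ for all such finite sequences it fails in $M$ for all of them; equivalently, an infinite properly descending chain starting with a low formula would be witnessed by finite approximations, each of which is a first-order statement true in $M$ hence true in $N$, contradicting the low-pp descending chain condition of $N$.

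Concretely, for the third bullet I would argue by contradiction: if $M$ fails the low-pp descending chain condition, fix $\{\phi_n\}_{n\in\omega}$ with $\phi_0$ low and $\phi_n[M] \supsetneq \phi_{n+1}[M]$ for all $n$. For each fixed $N$-independent $n$, the sentence $\sigma_n := \exists \bar{x}\,(\phi_n(\bar{x}) \wedge \neg\phi_{n+1}(\bar{x}))$ holds in $M$, hence in $N$; and the sentence asserting $\phi_{n+1}[x] \subseteq \phi_n[x]$, namely $\forall x(\phi_{n+1}(x) \to \phi_n(x))$, also transfers. So in $N$ we get $\phi_n[N] \supsetneq \phi_{n+1}[N]$ for every $n$, with $\phi_0$ low, contradicting that $N$ has the low-pp descending chain condition (which it does by Lemma \ref{esigma}). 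This completes that bullet.

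I do not expect any serious obstacle here — this corollary is a routine consequence of the lattice-theoretic reformulation in Lemma \ref{esigma}. The only point requiring a moment's care is the translation in the third bullet between ``infinite properly descending chain" and ``first-order expressible finite approximations", i.e. observing that each step of the chain (a containment plus a strict inequality at a single level) is individually first-order and therefore preserved under elementary equivalence; and in the second bullet, correctly invoking purity to get $\phi_n[M] = \phi_n[N] \cap M$. Both are standard, so the write-up can be kept short.
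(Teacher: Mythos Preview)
Your approach is exactly what the paper intends: the corollary is stated without proof, as an immediate consequence of Lemma \ref{esigma}, and you correctly route all three bullets through the low-pp descending chain condition. The first and third bullets are fine as written.

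There is, however, a small but genuine slip in the second bullet. From $\phi_n[M]=\phi_n[N]\cap M$ and the assumption that $\{\phi_n[M]\}_n$ is descending, you write ``Hence $\{\phi_n[N]\}_n$ is a descending chain in $N$''. That inference is not valid: knowing that the intersections with $M$ are nested says nothing about the containments $\phi_n[N]\supseteq\phi_{n+1}[N]$ in $N$. The fix is the standard one: replace $\phi_n$ by the conjunction $\psi_n:=\phi_0\wedge\cdots\wedge\phi_n$. Then $\psi_0=\phi_0$ is still low, the chain $\{\psi_n[N]\}_n$ is descending by construction, and $\psi_n[M]=\phi_n[M]$ since $\{\phi_n[M]\}_n$ was already descending. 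Now the low-pp DCC in $N$ gives $n_0$ with $\psi_{n_0}[N]=\psi_k[N]$ for $k\ge n_0$, and intersecting with $M$ yields $\phi_{n_0}[M]=\phi_k[M]$ as required. With this one-line correction your argument goes through.
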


\begin{remark}\label{comp}
Let $\Phi$ be a collection of $pp$-formulas. We say that $M$ is an $\s_\Phi$-torsion module if and only if for every $m \in M$ there is a $\phi \in \Phi$ such that $M \vDash \phi(m)$. Given a module $M$, let  $\s_\Phi(M) = \{ m \in M : M \vDash \phi(m) \text{ for some } \phi \in \Phi \}$. This set up is similar to that of \cite[5.3]{roth15} and of the the forthcoming paper \cite{maro}.

If $\Phi$ is such that for every $N$ we have that $\s_\Phi(N) \leq_p N$ and the class of $\s_\Phi$-torsion modules is closed under finite direct sums, then $\s_\Phi(\text{-})$ is an idempotent radical and all the results we have proven so far hold for $\s_\Phi$-torsion modules. Where the low-pp descending change condition becomes the $\Phi$-descending chain condition.

Some examples of $\Phi$ include:
\begin{itemize}
\item $\Phi = \{ \phi : \phi \text{ is a low pp-formulas} \}$ under Hypothesis \ref{10hyp2}. Clearly the $\s_\Phi$-torsion modules are precisely the $\s$-torsion modules. 
\item  $\Phi = \{ x=x\}$. Clearly every module is an $\s_\Phi$-torsion module. Taking this $\Phi$, it follows that the results in this section extend the classical characterizations of pure-injective and $\Sigma$-pure-injective modules of  \cite{ste1}, \cite{kie}, \cite{war}, \cite{gj}, and \cite{zimm}. 
\item  $\Phi = \{ p^n x=0 : n < \omega \}$ for a fixed prime number $p$ in the ring of integers. Clearly the $\s_\Phi$-torsion modules are precisely the abelian $p$-groups. 
\item $\Phi = \{ \phi : \text{for every } M\in \mathcal{F} \text{, } M \vDash \forall x (\phi(x) \leftrightarrow x=0) \}$ where $\mathcal{F}$ is taken with respect to any torsion
theory $(\mathcal{T}, \mathcal{F})$ under the assumption that $\s_\Phi(M) \leq_p M$. The $\s_\Phi$-torsion modules are the elementary $\mathcal{T}$-torsion modules of \cite[5.6]{roth15}. 
\end{itemize}
\end{remark}

\section{$\s$-torsion modules as an AEC}

In this section we study the class of $\s$-torsion modules with pure embeddings as an abstract elementary class. There are three reasons why we decided to study $\s$-torsion modules with respect to pure embeddings rather than with respect to embeddings. Firstly, the class of $\s$-torsion modules is defined with respect to all low $pp$-formulas and not only those low quantifier-free formulas. Secondly, the class of $\s$-torsion modules is closed under pure submodules, but it is not necessarily closed under submodules. Finally, the original objective of this paper was to understand the class of torsion abelian groups with pure embeddings.

As in the previous section we are assuming that $\Ks$ is non-trivial.

\subsection{Basic properties} We begin by recalling some basic properties of the AEC of $\s$-torsion modules with pure embeddings. 

\begin{fact}
Let $R$ be a ring and $\Kt=(\Ks, \leq_p)$.
\begin{enumerate}
\item $\Kt$ is an AEC with $\LS(\Kt)=|R| + \aleph_0$.

\item $\Kt$ has amalgamation, joint embedding, and no maximal models.
\item If $\lambda^{|R| + \aleph_0} = \lambda$, then $\Kt$ is $\lambda$-stable.
\end{enumerate}
\end{fact}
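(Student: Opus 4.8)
The statement to prove is a \textbf{Fact} with three parts, so the plan is largely one of assembling known results about AECs of modules and adapting them to the torsion setting.

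\textbf{Part (1): $\Kt$ is an AEC with $\LS(\Kt) = |R| + \aleph_0$.} First I would verify the axioms of an abstract elementary class for $(\Ks, \leq_p)$. The class $\Ks$ is closed under isomorphism, and $\leq_p$ is a partial order refining the submodule relation. The key structural axioms are: closure under unions of $\leq_p$-increasing chains, and the coherence axiom. For closure under chains, note that a directed union of pure submodules is a pure submodule, and if each $M_i \in \Ks$ then $\bigcup_i M_i \in \Ks$ because $\s$-torsion is witnessed element-by-element by a low formula, and each element lies in some $M_i$; since $M_i \leq_p \bigcup_i M_i$, the witnessing low formula is preserved upward. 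Coherence ($M_0 \leq_p M_2$, $M_1 \leq_p M_2$, $M_0 \subseteq M_1$ imply $M_0 \leq_p M_1$) is a standard fact about pure embeddings and $pp$-formulas. For the L\"owenheim--Skolem number: given $M \in \Ks$ and $A \subseteq |M|$ with $|A| \le \lambda := |R| + \aleph_0$, one builds a pure submodule $N$ of $M$ with $A \subseteq N$ and $\|N\| \le \lambda$ by the usual closing-off argument (iteratively adding, for each tuple and each $pp$-formula it satisfies, witnesses to the existential quantifiers); the resulting $N$ is pure in $M$ hence lies in $\Ks$ by Remark \ref{c-sum} (closure under pure submodules). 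This is exactly \cite[\S 2]{m4} or \cite{maz2} mutatis mutandis, so I would cite those and only indicate the torsion-specific point (upward preservation of the witnessing low formula).

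\textbf{Part (2): amalgamation, joint embedding, no maximal models.} For amalgamation: given pure embeddings $M \leq_p N_1$ and $M \leq_p N_2$ with all three in $\Ks$, the pushout $N_1 \oplus_M N_2$ in the category of modules receives pure embeddings of $N_1$ and $N_2$ (this is a standard fact: amalgamation holds in the AEC of all modules with pure embeddings), and the pushout is a directed union of... more carefully, one should check the pushout is again $\s$-torsion. Since $\Ks$ is closed under pure submodules and the amalgam is generated by the pure images of $N_1, N_2$ which are $\s$-torsion, and $\s$ is a radical closed under the relevant operations, the amalgam lies in $\Ks$ --- alternatively invoke that $\Ks$ is closed under direct sums and quotients by pure submodules in the Pr\"ufer/semihereditary situation; the cleanest route is to cite \cite{maz2} where this is done. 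Joint embedding follows from amalgamation together with the existence of a common substructure --- here the zero module is in $\Ks$ (or one uses Hypothesis \ref{10hyp1} together with direct sums) and embeds purely into everything, so JEP reduces to AP. No maximal models: given $M \in \Ks$, the module $M \oplus M'$ for any nonzero $M' \in \Ks$ (which exists by Hypothesis \ref{10hyp1}) is a proper pure extension of $M$ in $\Ks$, using closure of $\Ks$ under direct sums.

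\textbf{Part (3): $\lambda^{|R|+\aleph_0} = \lambda$ implies $\lambda$-stability.} The plan is the standard counting argument for AECs of modules. By \cite[4.16]{maz2} (cited in the introduction) the class is stable, but for the explicit cardinal bound I would argue directly: Galois-types in an AEC of modules with pure embeddings and amalgamation are determined by $pp$-types (this is the key reduction, also from \cite{m4}/\cite{maz2}), and over a model $M$ with $\|M\| = \lambda$ there are at most $\lambda^{|R|+\aleph_0}$ many $pp$-formulas-with-parameters, hence at most $2^{|R|+\aleph_0 + \lambda}$ ... no --- more sharply, a $pp$-$1$-type over $M$ is a set of $pp$-formulas with parameters from $M$, and the number of $pp$-formulas over a language of size $|R|+\aleph_0$ with parameters in $M$ is $\lambda + |R| + \aleph_0 = \lambda$, so there are at most $2^\lambda$ such types --- but the finer bound uses that a $pp$-type is determined by which of the $\lambda$ many $pp$-definable subgroups (with parameters) it meets, and a more careful accounting gives $\lambda^{|R|+\aleph_0} = \lambda$. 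I would simply cite the relevant counting lemma from \cite{maz2} or \cite{m4}; the main content is the reduction of Galois-types to $pp$-types, which holds because the class has amalgamation and pure embeddings.

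\textbf{Main obstacle.} The only genuinely non-routine point is confirming that $\Ks$ is closed under the module-theoretic amalgam (pushout), i.e., that the pushout of two $\s$-torsion modules over an $\s$-torsion module is again $\s$-torsion; everything else is a transcription of standard AEC-of-modules arguments. I expect this to follow from $\s$ being an idempotent radical (Proposition \ref{idem}, available since $R$ is right semihereditary --- though note this Fact is stated for general $R$, so in the general case one must instead observe that the pushout is a quotient of $N_1 \oplus N_2$, which is $\s$-torsion by Remark \ref{c-sum}, and $\s$-torsion modules are closed under quotients since $\s$ is a radical), and I would present it as a short lemma or a reference to \cite{maz2}.
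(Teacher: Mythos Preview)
Your proposal is correct and aligns with the paper's approach: the paper's own proof of this Fact is a bare citation (``(1) and (2) follow from \cite[4.2.(4), 4.8]{maz2} and (3) from \cite[4.16]{maz2}''), and you are essentially unpacking those cited arguments while citing the same sources. Your identification of the one nontrivial point---that the module-theoretic amalgam stays in $\Ks$, handled via closure under direct sums and the radical property giving closure under quotients (so no semihereditary hypothesis is needed here)---is exactly right and is the content behind the citation.
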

\begin{proof}
(1) and (2) follow from \cite[4.2.(4), 4.8]{maz2} and (3) from \cite[4.16]{maz2}. 
\end{proof}

\begin{lemma}
$\Kt = (\Ks, \leq_p)$ is nicely generated in $(R\text{-Mod}, \leq_p)$ in the sense of \cite[4.1]{m4}, i.e., if $N_1, N_2 \in \Ks$ and $N_1, N_2\leq_p N$ for some module $N$, then there is $L \in \Ks$ such that $N_1, N_2 \leq_p L \subseteq N$.
\end{lemma}
\begin{proof}
 If $N_1, N_2 \in \Ks$ and $N_1, N_2\leq_p N$ for some module $N$, then $L = N_1 + N_2 \in \Ks$  and $N_1, N_2 \leq_p L \subseteq N$.
\end{proof}

The next result follows directly from the previous lemma, \cite[4.5]{m4}, and \cite[3.7]{KuMaz}.

\begin{cor}\label{gtp=pp} Let $N_1, N_2 \in \Ks$, $M \leq_p N_1, N_2$, $\bar{b}_1 \in N_1^{< \omega}$ and $ \bar{b}_2 \in N_2^{< \omega}$, then:
\[ \gtp_{\Ks}(\bar{b}_1/M; N_1) = \gtp_{\Ks}(\bar{b}_2/M; N_2) \text{ if and only if } pp(\bar{b}_1/M, N_1) = pp(\bar{b}_2/M, N_2).\]

In particular, $\Kt$ is $(<\aleph_0)$-tame.
\end{cor}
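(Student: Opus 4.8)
The plan is to deduce Corollary~\ref{gtp=pp} directly from the ``nicely generated'' lemma just proved, together with the abstract machinery cited from \cite{m4} and \cite{KuMaz}. The key point is that in the ambient class $(R\text{-Mod}, \leq_p)$ Galois-types coincide with $pp$-types: this is a standard fact about the AEC of all $R$-modules under pure embeddings, since that class has amalgamation and the monster model is $\aleph_1$-saturated in the elementary sense, so the orbit of a tuple under automorphisms fixing $M$ is determined by its $pp$-type over $M$. The content of \cite[3.7]{KuMaz} (or the relevant reference) is precisely this identification. So first I would invoke that result to get $\gtp_{(R\text{-Mod},\leq_p)}(\bar b_1/M;N_1) = \gtp_{(R\text{-Mod},\leq_p)}(\bar b_2/M;N_2)$ iff $pp(\bar b_1/M,N_1) = pp(\bar b_2/M,N_2)$.

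Next I would use the ``nicely generated'' property to transfer this equivalence from the big class down to $\Kt = (\Ks,\leq_p)$. The forward implication of \cite[4.5]{m4} should say that if $\K$ is nicely generated in a larger AEC $\K'$, then for models and tuples inside $\K$, equality of Galois-types in $\K'$ implies equality of Galois-types in $\K$ (and conversely, since any $\K$-embedding is a $\K'$-embedding). Concretely: given $N_1,N_2 \in \Ks$ with $M \leq_p N_1,N_2$ and $\bar b_1,\bar b_2$ realizing the same $pp$-type over $M$, amalgamate $N_1$ and $N_2$ over $M$ inside $R\text{-Mod}$ to get a pure extension $N$ with $\bar b_1,\bar b_2$ mapped to elements with the same $pp$-type over $M$; by saturation of the ambient monster there is an automorphism over $M$ taking one to the other. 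Then use nice generation to find $L \in \Ks$ with $N_1,N_2 \leq_p L \subseteq N$ (here $L = N_1 + N_2$ works, exactly as in the preceding lemma), so that the witnessing configuration lives inside $\Kt$; this gives $\gtp_{\Ks}(\bar b_1/M;N_1) = \gtp_{\Ks}(\bar b_2/M;N_2)$. For the converse direction, a $\Kt$-embedding witnessing equality of Galois-types in $\Kt$ is in particular a pure embedding, hence preserves and reflects $pp$-formulas, so the $pp$-types agree.

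Finally, $(<\aleph_0)$-tameness of $\Kt$ is immediate from the first part: if $p \neq q$ are Galois-types over $M \in \Ks$, realized by $\bar b_1 \in N_1$ and $\bar b_2 \in N_2$ respectively, then by the equivalence just established $pp(\bar b_1/M,N_1) \neq pp(\bar b_2/M,N_2)$, so there is a $pp$-formula $\phi(\bar x, \bar a)$ with $\bar a$ a finite tuple from $M$ that holds of one but not the other; restricting both types to the finite set $A = \{\bar a\}$ already distinguishes them, since $pp$-types over a finite set are again computed the same way and the Galois-type over $A$ is determined by the $pp$-type over $A$.

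The main obstacle is bookkeeping rather than mathematics: one has to be careful that the cited results \cite[4.5]{m4} and \cite[3.7]{KuMaz} are applied with exactly the right hypotheses — in particular that $(R\text{-Mod},\leq_p)$ is the correct ambient AEC, that it has amalgamation so that Galois-types there really do reduce to $pp$-types, and that ``nicely generated'' is being used in the same sense as in \cite[4.1]{m4}. Once the interface between these black boxes and the situation at hand is pinned down, the proof is a short diagram chase; there is no hard estimate or construction to carry out.
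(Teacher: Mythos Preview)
Your proposal is correct and follows exactly the paper's approach: the paper simply states that the corollary follows from the preceding ``nicely generated'' lemma together with \cite[4.5]{m4} and \cite[3.7]{KuMaz}, which is precisely the chain of reductions you spell out. Your additional unpacking of what those cited results say and how tameness follows from the $pp$-type characterization is accurate and matches the intended argument.
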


The next result follows from the previous lemma and \cite[4.6]{m4}.

\begin{cor} Let $\lambda \geq |R| + \aleph_0$.
If $(R\text{-Mod}, \leq_p)$ is $\lambda$-stable, then $(\Ks, \leq_p)$ is $\lambda$-stable.
\end{cor}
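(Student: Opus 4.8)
The plan is to reduce the statement about the torsion class to the corresponding statement about the full class of modules, using the "nice generation" result just established. The key external inputs are the previous lemma, which says $\Kt = (\Ks, \leq_p)$ is nicely generated in $(R\text{-Mod}, \leq_p)$, and Fact/Theorem~4.6 of \cite{m4}, which presumably states that stability transfers from a nicely-generating class to a nicely-generated subclass (or perhaps that Galois-types in the subclass are restrictions of Galois-types in the ambient class). So the proof is essentially a one-line invocation: cite the lemma, cite \cite[4.6]{m4}, and conclude.

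First I would recall the hypothesis $\lambda \geq |R| + \aleph_0 = \LS(\Kt)$, which is exactly the cardinal regime in which the transfer result applies. Then I would note that $(R\text{-Mod}, \leq_p)$ being $\lambda$-stable means $|\gS_{R\text{-Mod}}(M)| \leq \lambda$ for every pure submodule $M$ of size $\lambda$. The content of \cite[4.6]{m4} is that when $\Kt$ is nicely generated inside the ambient AEC, every Galois-type over $M \in \Ks_\lambda$ in the sense of $\Kt$ is determined by (indeed, is the restriction of, or embeds into) a Galois-type over $M$ in $(R\text{-Mod}, \leq_p)$ — this uses Corollary~\ref{gtp=pp}-style reasoning that Galois-types are computed by $pp$-types and that $pp$-types are the same whether witnessed inside $\Ks$ or inside $R\text{-Mod}$, since any witness in the ambient class can be pulled into a torsion model via nice generation. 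Hence $|\gS_{\Ks}(M)| \leq |\gS_{R\text{-Mod}}(M)| \leq \lambda$, giving $\lambda$-stability of $(\Ks, \leq_p)$.

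I do not expect any real obstacle here: the corollary is explicitly flagged in the paper as following "directly" from the previous lemma and \cite[4.6]{m4}. The only point requiring minor care is checking that the cardinal bound $\lambda \geq |R| + \aleph_0$ matches the hypothesis of \cite[4.6]{m4} and that $\Ks_\lambda$ is nonempty (guaranteed by closure under direct sums from Hypothesis~\ref{10hyp1}, so that $\lambda$-stability is not vacuous in a misleading way). Thus the proof I would write is simply:

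\begin{proof}
By the previous lemma $\Kt = (\Ks, \leq_p)$ is nicely generated in $(R\text{-Mod}, \leq_p)$, so the result follows from \cite[4.6]{m4}.
\end{proof}
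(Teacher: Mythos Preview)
Your proposal is correct and matches the paper's approach exactly: the paper states that the corollary follows from the previous lemma and \cite[4.6]{m4}, without further elaboration. Your one-line proof is precisely what the paper intends.
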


For an arbitrary ring $R$,  $(R\text{-Mod}, \leq_p)$ is $\lambda$-stable if $\lambda^{|R| + \aleph_0}=\lambda$ by \cite[3.16]{KuMaz}. Moreover, if $R$ is left pure-semisimple, then  $(R\text{-Mod}, \leq_p)$ is $\lambda$-stable for every $\lambda \geq |R| + \aleph_0$ by \cite[4.28]{m2}.

\subsection{Limit models and superstability} We characterize limit models algebraically and use them to characterize superstability.

We assume \emph{Hypothesis \ref{10hyp2}} for the rest of this section, i.e., we assume that $\s(N) \leq_p N$ for every left $R$-module $N$. 

\begin{lemma}
If $M$ is $(|R| + \aleph_0)^+$-saturated in $\Kt$, then $M$ is $\Ks$-pure-injective.
\end{lemma}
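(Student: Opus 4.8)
The plan is to use the characterization of $\Ks$-pure-injectivity from Lemma \ref{mod}, specifically the equivalence between item (2) and item (1): it suffices to show that $M$ is relatively $pp$-saturated in $N$ for every $N \in \Ks$ with $M \leq_p N$. So fix such an $N$, fix $b \in N$, and let $p = pp(b/M, N)$; we must find $m \in M$ realizing $p$. The idea is to repackage $p$ as a Galois-type over a small $\K$-substructure and exploit $(|R|+\aleph_0)^+$-saturation of $M$, using Corollary \ref{gtp=pp} to translate between $pp$-types and Galois-types.

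First I would find a $\K$-substructure $M_0 \leq_p M$ with $\|M_0\| \leq |R| + \aleph_0$ such that $p$ does not "split" over $M_0$, or more simply such that $p$ is determined by its restriction to $M_0$ together with the information that $b$ is $\s$-torsion. Concretely, since each $pp$-formula has finitely many parameters from $M$, the type $p$ mentions at most $|R| + \aleph_0$ parameters; collect them into a set $A \subseteq M$ of size $\leq |R| + \aleph_0$ and close it off (inside $M$, using that $\Kt$ is an AEC with $\LS(\Kt) = |R|+\aleph_0$ and that $\Ks$ is closed under pure submodules) to a $\K$-substructure $M_0 \leq_p M$ of size $\leq |R| + \aleph_0$. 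Then $p = pp(b/M, N)$ is the unique $pp$-type over $M$ extending $pp(b/M_0, N')$ for an appropriate pure submodule $N' \leq_p N$ containing $M_0$ and $b$; the point is simply that $pp(b/M_0, N)$ already records everything, since no $pp$-formula can involve parameters outside $M_0$... except of course $pp$-formulas with parameters in $M \setminus M_0$. This is the subtle point: a $pp$-formula $\phi(\bar x, \bar a)$ with $\bar a \in M$ need not have $\bar a \in M_0$, so restricting to $M_0$ genuinely loses information and one cannot realize all of $p$ just from $p \restriction M_0$.

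The correct route, then, is: apply Corollary \ref{gtp=pp} to see that $q := \gtp_{\Ks}(b/M_0; N)$ is a Galois-type over $M_0 \in \K_{\leq |R| + \aleph_0}$. Since $\|M_0\| < (|R|+\aleph_0)^+$ and $M$ is $(|R|+\aleph_0)^+$-saturated, $q$ is realized in $M$, say by $m_0 \in M$; equivalently $pp(m_0/M_0, M) = pp(b/M_0, N)$. Now iterate: this handles the parameters in $M_0$, but to capture parameters in all of $M$ one builds an increasing continuous chain $\langle M_i : i < (|R|+\aleph_0)^+ \rangle$ of $\K$-substructures of $M$, together with elements $m_i \in M$ and pure submodules $N_i \leq_p N$, such that $m_i$ realizes $pp(b/M_i, N_i)$ and $M_{i+1} \leq_p M$ contains $M_i \cup \{m_i\}$, using saturation at each successor step and continuity/closure under unions at limits. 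Since every $pp$-formula in $p$ has finitely many parameters, all lying in some $M_i$ with $i < (|R|+\aleph_0)^+$, and the chain is cofinal in (a submodule of) $M$ of size $\geq (|R|+\aleph_0)^+$... here one must be slightly careful, because $M$ may have size larger than $(|R|+\aleph_0)^+$, so a single chain of that length need not exhaust $M$. The clean fix is to first reduce to the case $\|M\| = (|R|+\aleph_0)^+$: since $b$'s type over $M$ is finitely approximated, and more importantly since we can replace $M$ by any $(|R|+\aleph_0)^+$-saturated pure submodule of size $(|R|+\aleph_0)^+$ — but such a submodule might not be a direct summand, so this doesn't immediately give a realization in the original $M$.

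I expect the main obstacle to be exactly this parameter-counting/cofinality bookkeeping: matching the length of the saturation chain against the size of $M$, and ensuring that the element $m \in M$ realizing $p$ is found inside $M$ itself rather than in some auxiliary extension. The author most likely circumvents it by a direct argument: given $p = pp(b/M, N)$, use that there is a low formula $\theta \in p$, form the $Th(M)$-type $q(x) = \{\phi(x, \bar a) : \phi(x,\bar a) \in p\}$, realize it in an elementary extension $M^* \succeq M$ by some $m^* \in \s(M^*)$ (as in the proof of Lemma \ref{mod}(1)), and then observe that $pp(m^*/M_0, \s(M^*)) = pp(b/M_0, N)$ for a small $M_0$; saturation of $M$ over the small model $M_0$, combined with $(<\aleph_0)$-tameness and Corollary \ref{gtp=pp}, lets one pull $m^*$ back to an $m \in M$ — and then a separate argument (compactness-style, or using that $M \leq^e$ its pure-injective hull) shows $m$ realizes all of $p$, not merely $p \restriction M_0$. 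I would aim to write the proof in that style, leaning on Lemma \ref{mod} and Corollary \ref{gtp=pp} to do the heavy lifting.
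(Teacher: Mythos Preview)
Your overall strategy is right: use Lemma~\ref{mod}(1), pass to a small submodel, and invoke saturation via Corollary~\ref{gtp=pp}. You also correctly diagnose the crux --- that realizing $p\rest_{M_0}$ does not obviously realize $p$ itself, since $p$ may contain formulas $\phi(x,\bar m)$ with $\bar m\notin M_0$. But the two remedies you propose do not close this gap. The chain construction cannot work: its length $(|R|+\aleph_0)^+$ need not be cofinal in $M$, and even if it were, there is no mechanism forcing the $m_i$'s to stabilize to a single element realizing the whole type. Your alternative sketch at the end is closer in spirit to the paper's argument, but the ``separate argument (compactness-style, or using that $M\leq^e PE(M)$)'' is precisely the missing idea, and neither compactness nor pure-essentiality supplies it.

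The key fact you are missing is that \emph{pp-formulas define cosets} (\cite[2.2]{prest}): if $\phi(x,\bar m)$ and $\phi(x,\bar m')$ are both in $p$, then they define the same coset of $\phi(M,\bar 0)$ (both contain $b$), hence are equivalent in $M$. Consequently, for each pp-formula $\phi(x,\bar y)$ one needs only a \emph{single} representative parameter tuple $\bar m_\phi$ with $\phi(x,\bar m_\phi)\in p$; collecting these over the $|R|+\aleph_0$ many pp-formulas yields a set $B$ of size $\leq |R|+\aleph_0$. Close $B$ to $M^*\leq_p M$ of that size, use saturation to realize $\gtp(b/M^*;N)$ by some $c\in M$, translate via Corollary~\ref{gtp=pp} to get $pp(c/M^*,M)=pp(b/M^*,N)$, and then the coset observation shows $c$ satisfies every $\phi(x,\bar m)\in p$, not just those with $\bar m\in M^*$. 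This is exactly the paper's proof, and it avoids any iteration entirely.
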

\begin{proof}
We use Theorem \ref{mod}. Let $M \leq_p N \in \Ks$ and $p=pp(b/M, N)$ for some $b \in N$. Given $\phi(x, \bar{y})$, a $pp$-formula, let $A_\phi =\{ \bar{m} \in M : \phi(x,\bar{m}) \in p \}$ and let $\bar{m}_\phi$ be an element of $A_\phi$ if $A_\phi \neq \emptyset$ and $\bar{m}_\phi = \bar{0}$ otherwise. Let $B = \bigcup_{\phi(x, \bar{y}) \in pp\text{-formula}} \bar{m}_\phi$ and $M^*$ be a structure obtained by applying the downward L\"{o}wenheim-Skolem axiom to $B$ in $M$. Observe that $\| M^* \| = |R| + \aleph_0$.

Let $q=\gtp(b/M^*; N)$. Since $M$ is $(|R| + \aleph_0)^+$-saturated in $\Kt$ there is $c \in M$ such that $q=\gtp(c/M^*; M)$.  Then $pp(c/M^*, M)=pp(b/M^*, N)$ by Lemma \ref{gtp=pp}. Using that $pp$-formulas define cosets \cite[2.2]{prest} and the choices of the $\bar{m}_\phi$'s, it follows that $c$ realizes $p$. 
\end{proof}

Using Fact \ref{sat}, we derive:

\begin{cor}\label{blim}
If $M$ is a $(\lambda, \alpha)$-limit model in $\Kt$ and $\cof(\alpha) \geq (|R| + \aleph_0)^+$, then $M$ is $\Ks$-pure-injective.
\end{cor}

We are going to show that limit models with \emph{long chains} are isomorphic. In order to do that, we obtain a couple of algebraic results regarding pure-injective modules. We begin by generalizing the following fact.

\begin{fact}[{\cite[2.5]{gks}}]\label{10ipi}
 Let $M, N$ be pure-injective modules. If there are pure embeddings $f: M \to N$ and $g: N \to M$, then $M$ and $N$ are isomorphic. 
\end{fact}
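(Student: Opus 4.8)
The statement is a Schröder--Bernstein-type theorem, so the plan is to first reduce it to an assertion purely about direct summands and then run a Bumby-style descending-chain argument, using pure-injectivity at the decisive reassembly step.

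\textbf{Reduction.} Since $f\colon M\to N$ is a pure embedding, $f[M]\cong M$ is a pure submodule of $N$; as $M$ is pure-injective, $\id_M$ extends along the inclusion $f[M]\leq_p N$ to a homomorphism $r\colon N\to M$ with $r\circ f=\id_M$, so $N\cong M\oplus\ker r$. A direct summand of a pure-injective module is pure-injective, so $N\cong M\oplus N_1$ with $N_1$ pure-injective, and symmetrically $M\cong N\oplus M_1$ with $M_1$ pure-injective. Thus it suffices to prove: if two pure-injective modules are each isomorphic to a direct summand of the other, then they are isomorphic.

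\textbf{The chain.} Work inside $M$ and set $L_{2k}=(gf)^k[M]$, $L_{2k+1}=(gf)^k[g[N]]$. Using that a pure embedding carries pure submodules to pure submodules and that purity is transitive, one gets a descending chain $M=L_0\supseteq L_1\supseteq L_2\supseteq\cdots$ of pure submodules of $M$ with $L_{2k}\cong M$, $L_{2k+1}\cong N$, and $gf$ restricting to an isomorphism $L_j\xrightarrow{\ \sim\ }L_{j+2}$. Since $L_j$ is pure-injective and pure in $L_{j-1}$, it is a direct summand: $L_{j-1}=L_j\oplus X_j$. As $gf$ carries the decomposition $L_{j-1}=L_j\oplus X_j$ onto $L_{j+1}=L_{j+2}\oplus gf[X_j]$, and any two complements of a fixed summand are isomorphic to the common quotient, we get $X_j\cong X_{j+2}$; write $A\cong X_1\cong X_3\cong\cdots$ and $B\cong X_2\cong X_4\cong\cdots$. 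For every $n$ we then have $M=X_1\oplus\cdots\oplus X_n\oplus L_n$ and $N\cong L_1=X_2\oplus\cdots\oplus X_{n+1}\oplus L_{n+1}$.

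\textbf{Reassembly (the crux).} One wants to pass to the limit and obtain $M\cong K\oplus\bigoplus_{j\geq1}X_j\cong K\oplus(A\oplus B)^{(\omega)}$ and likewise $N\cong K\oplus\bigoplus_{j\geq2}X_j\cong K\oplus(A\oplus B)^{(\omega)}$ for a common core $K$ built from $\bigcap_j L_j$, whence $M\cong N$. The main obstacle is exactly this step: the naive guess $M\cong\bigcap_jL_j\ \oplus\ \bigoplus_jX_j$ is false in general -- for instance $M=\prod_{\omega}\mathbb{F}_p$ with $f,g$ the shift maps has $\bigcap_jL_j=0$ while every $X_j\cong\mathbb{F}_p$ -- so pure-injectivity must be used essentially to fit infinitely many pieces together coherently. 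Concretely I would invoke the known fact that every pure-injective module has the (full) exchange property, which allows the complements $X_j$ and the remainder to be chosen simultaneously so that the infinite decomposition closes up; equivalently, one applies the Schröder--Bernstein theorem for modules with the exchange property to $M$ and $N$ directly. (The plain Eilenberg swindle is insufficient here: it only yields $M\oplus(N_1\oplus M_1)^{(\omega)}\cong N\oplus(N_1\oplus M_1)^{(\omega)}$, and pure-injective modules need not cancel from direct sums.) Everything outside this exchange-property input -- the reduction and the construction of the chain -- is routine.
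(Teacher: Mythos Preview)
The paper does not give a proof of this statement: it is recorded as a \emph{Fact} with a citation to \cite[2.5]{gks} and then used as a black box in the lemma that follows. There is therefore no in-paper argument to compare your proposal against.

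Your outline is nonetheless sound and correctly isolates the one genuinely nontrivial point. The reduction to ``each is isomorphic to a direct summand of the other'' via pure-injectivity, and the construction of the descending chain $L_0\supseteq L_1\supseteq\cdots$ with alternating slices $A,B$, are standard and correct. You are also right that the naive reassembly $M\cong\bigcap_j L_j\oplus\bigoplus_j X_j$ can fail and that pure-injectivity must be used essentially at this step; the shift example in $\prod_\omega\mathbb{F}_p$ is an apt witness. What remains is exactly the black box you name --- that pure-injective modules enjoy enough of the exchange property to push a Schr\"oder--Bernstein argument through --- and that is itself the substantive content of the cited source rather than something you have supplied. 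So your proposal is less an independent proof than a correct identification of where the real input lies; if you want a self-contained argument you still owe either a proof of the relevant exchange property for pure-injectives or a direct Bumby-type argument tailored to them.
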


\begin{lemma}
Let $M, N$ be any two modules. If there are pure embeddings $f: M \to N$ and $g: N \to M$, then $PE(M)$ and $PE(N)$ are isomorphic. 
\end{lemma}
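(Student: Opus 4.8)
The plan is to reduce the statement to Fact \ref{10ipi} by passing to pure-injective envelopes. From $f : M \to N$ a pure embedding, compose with the pure embedding $N \leq_p PE(N)$ to get a pure embedding $M \to PE(N)$; since $PE(N)$ is pure-injective, this extends to a homomorphism $PE(M) \to PE(N)$, and one checks it is still a pure embedding using the pure-essentiality $M \leq^e PE(M)$ from Fact \ref{esse}.(2) (the composite $M \hookrightarrow PE(M) \to PE(N)$ is a pure embedding, so by definition of pure-essential the map $PE(M) \to PE(N)$ is a pure embedding). Symmetrically, $g : N \to M$ yields a pure embedding $PE(N) \to PE(M)$. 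Now $PE(M)$ and $PE(N)$ are pure-injective modules with pure embeddings in both directions, so Fact \ref{10ipi} gives $PE(M) \cong PE(N)$.

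In more detail, the key steps in order are: (i) build $\hat f : PE(M) \to PE(N)$ extending the composite $M \xrightarrow{f} N \leq_p PE(N)$, using pure-injectivity of $PE(N)$; (ii) verify $\hat f$ is a pure embedding via pure-essentiality of $M$ in $PE(M)$ — here I use that $\hat f \circ i_M$ equals $i_N \circ f$ which is a pure embedding, where $i_M : M \hookrightarrow PE(M)$ and $i_N : N \hookrightarrow PE(N)$ are the inclusions; (iii) repeat (i)–(ii) with the roles of $M$ and $N$ swapped, producing a pure embedding $\hat g : PE(N) \to PE(M)$; (iv) invoke Fact \ref{10ipi} on $\hat f$ and $\hat g$ to conclude $PE(M) \cong PE(N)$.

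The only subtle point — the part that needs care rather than being purely formal — is step (ii): the definition of pure-essential submodule says that for every homomorphism $h : PE(M) \to N'$, if $h \circ i_M$ is a pure embedding then $h$ is a pure embedding. Applying this with $N' = PE(N)$ and $h = \hat f$ is exactly what is needed, provided $\hat f \circ i_M$ is indeed a pure embedding; but $\hat f$ was chosen precisely to extend $M \xrightarrow{f} N \leq_p PE(N)$, so $\hat f \circ i_M$ is the pure embedding $i_N \circ f$. Thus the verification goes through cleanly. I do not anticipate a genuine obstacle here; the argument is the standard ``pure-injective envelopes are functorial up to the essentiality property'' manipulation, and everything needed (existence and minimality of $PE(-)$, pure-essentiality $M \leq^e PE(M)$, and the Cantor–Schröder–Bernstein property for pure-injective modules) has already been recorded as Fact \ref{esse} and Fact \ref{10ipi} above.
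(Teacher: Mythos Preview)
Your proposal is correct and follows essentially the same approach as the paper: produce pure embeddings $PE(M)\to PE(N)$ and $PE(N)\to PE(M)$, then apply Fact~\ref{10ipi}. The only difference is cosmetic: the paper obtains these pure embeddings in one line by invoking the minimality property of the pure-injective envelope (stated just before the definition of pure-essential), whereas you unpack that property by hand via pure-injectivity plus the pure-essentiality of Fact~\ref{esse}(2).
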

\begin{proof}
It is enough to show that there are $f': PE(M) \to PE(N)$ and $g': PE(N) \to PE(M)$ pure embeddings, as then the result follows directly from Fact \ref{10ipi}. The existence of $f'$ and $g'$ follow from the minimality of $PE(M)$ and $PE(N)$ respectively. 
\end{proof}

\begin{cor}
Let $M, N$ be $\s$-torsion and $\Ks$-pure-injective modules. If there are pure embeddings $f: M \to N$ and $g: N \to M$, then $M$ and $N$ are isomorphic.
\end{cor}
\begin{proof}
Follows directly from the previous result and Theorem \ref{mod}.(3).
\end{proof}

Since $\lambda$-limit models are universal in $(\Kt)_\lambda$, we obtain the following result.

\begin{cor}\label{uni} Assume $\lambda\geq |R| + \aleph_0$.
If $M, N$ are $\lambda$-limit models in $\Kt$ and $\Ks$-pure-injective, then $M$ and  $N$ are isomorphic. 
\end{cor}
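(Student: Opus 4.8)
The plan is to combine the universality of limit models with the previous corollary on back-and-forth pure embeddings between $\s$-torsion $\Ks$-pure-injective modules. Concretely, suppose $M$ and $N$ are both $\lambda$-limit models in $\Kt$ and both are $\Ks$-pure-injective. First I would invoke the fact quoted right after Fact \ref{existence} (namely \cite[2.10]{maz20}), which states that every $\lambda$-limit model is universal in $(\Kt)_\lambda$, since $\Kt$ has the joint embedding property. Thus $M$ is universal in $(\Kt)_\lambda$ and $N$ is universal in $(\Kt)_\lambda$, so there is a $\K$-embedding (hence a pure embedding) $f\colon N \to M$ and a $\K$-embedding $g\colon M \to N$.

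The second step is purely formal: both $M$ and $N$ lie in $\Ks$ and are $\Ks$-pure-injective by hypothesis, and we have exhibited pure embeddings in both directions, so the hypotheses of the immediately preceding corollary are satisfied. Applying it yields $M \cong N$, which is exactly what we want.

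I do not anticipate any real obstacle here — this is a short bookkeeping argument — but the one point to be careful about is that the notion of ``universal in $(\Kt)_\lambda$'' only guarantees $\K$-embeddings of models of size $\lambda$, so one should first note that both limit models indeed have cardinality $\lambda$ (which is part of the definition of $\lambda$-limit model) before quoting universality. Once both directions of pure embeddings are in hand, the isomorphism is handed to us directly by the corollary above, with no further computation required.

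\begin{proof}
Since $\Kt$ has the joint embedding property, every $\lambda$-limit model is universal in $(\Kt)_\lambda$ by \cite[2.10]{maz20}. As $M$ and $N$ are $\lambda$-limit models, they both have cardinality $\lambda$ and both are universal in $(\Kt)_\lambda$. Hence there is a $\K$-embedding $f\colon N \to M$ and a $\K$-embedding $g\colon M \to N$; in particular $f$ and $g$ are pure embeddings. Since $M$ and $N$ are $\s$-torsion and $\Ks$-pure-injective, the previous corollary applies and gives $M \cong N$.
\end{proof}
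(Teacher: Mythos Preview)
Your proof is correct and matches the paper's approach exactly: the paper simply remarks that $\lambda$-limit models are universal in $(\Kt)_\lambda$ and then invokes the preceding corollary, which is precisely what you do (with a bit more detail filled in).
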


Putting together the above assertion with Lemma \ref{blim}, we obtain:

\begin{cor}\label{longuni} Assume $\lambda\geq (|R| + \aleph_0)^+$.
 If $M$ is a $(\lambda, \alpha)$-limit model in $\Kt$ and $N$ is a $(\lambda,
\beta)$-limit model in $\Kt$ such that $\cof(\alpha), \cof(\beta) \geq (|R| + \aleph_0)^+$,
then $M$ and  $N$ are isomorphic. 
\end{cor}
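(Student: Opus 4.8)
The plan is to derive Corollary \ref{longuni} directly from the two results immediately preceding it, namely Corollary \ref{blim} and Corollary \ref{uni}, together with the standing hypotheses that $R$ is right semihereditary and that we are working in the AEC $\Kt = (\Ks, \leq_p)$, which has amalgamation, joint embedding, and no maximal models. The whole point is that the hard work has already been done; what remains is to package it.

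First I would observe that since $\cof(\alpha) \geq (|R|+\aleph_0)^+$, Corollary \ref{blim} applies to $M$ and tells us that $M$ is $\Ks$-pure-injective; symmetrically, since $\cof(\beta) \geq (|R|+\aleph_0)^+$, the same corollary applied to $N$ gives that $N$ is $\Ks$-pure-injective. Next I would note that $M$ is a $(\lambda,\alpha)$-limit model, hence a $\lambda$-limit model, and likewise $N$ is a $\lambda$-limit model, where $\lambda \geq (|R|+\aleph_0)^+ \geq |R|+\aleph_0$, so the cardinal hypothesis of Corollary \ref{uni} is met. Then Corollary \ref{uni}, applied to the pair $M, N$ — both $\lambda$-limit models and both $\Ks$-pure-injective — yields immediately that $M \cong N$, which is exactly the conclusion.

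There is essentially no obstacle here; the step that carries the content is verifying the cofinality hypotheses feed correctly into Corollary \ref{blim} for \emph{both} limit models, and that a $(\lambda,\alpha)$-limit model counts as a $\lambda$-limit model in the sense needed by Corollary \ref{uni}, which is immediate from Definition \ref{10limit}. One mild point worth stating explicitly is that $\Kt$ does have $\lambda$-limit models at all — but this follows from $\lambda$-stability at such $\lambda$ (which holds whenever $\lambda^{|R|+\aleph_0} = \lambda$, and in any case is implicit in the hypothesis that $M$ and $N$ exist as limit models) via Fact \ref{existence}. So the proof is a two-line citation chain: apply Corollary \ref{blim} twice to get $\Ks$-pure-injectivity of $M$ and $N$, then apply Corollary \ref{uni}.
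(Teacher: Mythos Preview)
Your proposal is correct and matches the paper's own approach exactly: the paper simply states that the corollary follows by putting together Corollary~\ref{uni} with Corollary~\ref{blim}, which is precisely your two-line citation chain. There is nothing to add.
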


A standard argument can be used to obtain the following assertion from Theorem \ref{mod}.(5) and Proposition \ref{10add}. See for example \cite[4.5, 4.6]{KuMaz}.

\begin{lemma}\label{ccountablelim}  Assume $\lambda\geq
(|R| + \aleph_0)^+$. If $M $ is
a $(\lambda, \omega)$-limit model in $\Kt$ and $N $ is a
$(\lambda, (|R| + \aleph_0)^+)$-limit model in $\Kt$, then $M$ and  $N^{(\aleph_0)}$ are isomorphic.

\end{lemma}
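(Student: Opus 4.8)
The plan is to follow the standard ``back-and-forth with long chains'' argument that is used for limit models in AECs of modules, adapting it to the relative $\Ks$-pure-injective setting via the results already established. Write $\kappa = (|R| + \aleph_0)^+$. I want to show $M \cong N^{(\aleph_0)}$, where $M$ is a $(\lambda,\omega)$-limit model and $N$ is a $(\lambda,\kappa)$-limit model.

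First I would unpack the defining chains. Fix an increasing continuous chain $\{M_i : i < \omega\} \subseteq (\Kt)_\lambda$ witnessing that $M = \bigcup_{i<\omega} M_i$ is a $(\lambda,\omega)$-limit model, with each $M_{i+1}$ universal over $M_i$. Since any two $(\lambda,\omega)$-limit models are isomorphic (by the usual back-and-forth between the two defining chains, using universality at each finite stage — this is the $\omega$-case that needs no pure-injectivity), it suffices to produce \emph{one} $(\lambda,\omega)$-limit model isomorphic to $N^{(\aleph_0)}$. So the real task is: build a chain of length $\omega$ of copies of $N$ whose union is $N^{(\aleph_0)}$, and check it is a valid witnessing chain. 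Concretely, set $N^*_i = N^{(i)}$ (the direct sum of $i$ copies of $N$) sitting inside $N^{(\aleph_0)}$ in the obvious way; then $\{N^*_i : i<\omega\}$ is increasing continuous with union $N^{(\aleph_0)}$, and each $N^*_i \leq_p N^*_{i+1}$ since direct summands are pure. Each $N^*_i \in \Ks$ by Remark \ref{c-sum}, and $\|N^*_i\| = \lambda$ since $\|N\| = \lambda$ and $i$ is finite. It remains to check $N^*_{i+1}$ is universal over $N^*_i$.

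For the universality step I would use that $N$, being a $(\lambda,\kappa)$-limit model with $\cof(\kappa) = \kappa > \LS(\Kt)$, is $\kappa$-saturated by Fact \ref{sat}, hence $\Ks$-pure-injective by the Lemma preceding Corollary \ref{blim} (saturated models are $\Ks$-pure-injective), and $N$ is universal in $(\Kt)_\lambda$ since every $\lambda$-limit model is. Now take any $N^{**} \in (\Kt)_\lambda$ with $N^*_i \leq_p N^{**}$; I want $f: N^{**} \xrightarrow[N^*_i]{} N^*_{i+1}$. Since $N^*_i \cong N^{(i)}$ is a finite direct sum of $\Ks$-pure-injective modules it is $\Ks$-pure-injective by Proposition \ref{10add}, so by Lemma \ref{mod}.(5) $N^*_i$ is a direct summand of $N^{**}$, say $N^{**} = N^*_i \oplus L$ with $L \in \Ks$ (a pure submodule of $N^{**}$) and $\|L\| \leq \lambda$. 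By universality of $N$ in $(\Kt)_\lambda$ (after padding $L$ to size exactly $\lambda$ if needed, or directly using that $N$ is universal for models of size $\leq \lambda$ via $\LS$) there is a pure embedding $g: L \to N$; then $\id_{N^*_i} \oplus g : N^*_i \oplus L \to N^*_i \oplus N = N^*_{i+1}$ fixes $N^*_i$ and is a $\Kt$-embedding. This gives universality of $N^*_{i+1}$ over $N^*_i$, completing the verification that $N^{(\aleph_0)} = \bigcup_i N^*_i$ is a $(\lambda,\omega)$-limit model, and then $M \cong N^{(\aleph_0)}$ by uniqueness of $(\lambda,\omega)$-limit models.

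The main obstacle I anticipate is the universality argument: one must be careful that the complement $L$ in the decomposition $N^{**} = N^*_i \oplus L$ really is $\s$-torsion and of the right cardinality so that $N$'s universality applies to it, and that $N$ is genuinely universal for \emph{all} models of size $\leq\lambda$ in the class (not just size exactly $\lambda$) — this is where one invokes that $\lambda$-limit models are universal in $(\Kt)_\lambda$ together with $\LS(\Kt) = |R|+\aleph_0 \leq \lambda$ to absorb smaller models. A secondary point to handle cleanly is the base case / uniqueness of $(\lambda,\omega)$-limit models, but this is routine back-and-forth using universality at successor stages and does not require any of the pure-injectivity machinery.
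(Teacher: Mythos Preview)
Your proposal is correct and is precisely the argument the paper has in mind: the paper omits the proof, saying only that the standard argument (as in \cite[4.5, 4.6]{KuMaz}) goes through using Lemma~\ref{mod}.(5) and Proposition~\ref{10add}, and you have spelled out exactly that argument. The only cosmetic slip is starting the chain at $N^*_0=N^{(0)}=0\notin(\Kt)_\lambda$; re-indexing as $N^*_i=N^{(i+1)}$ fixes this without changing anything substantive.
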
 

We also have that limit models are elementarily equivalent. The argument of \cite[4.2]{KuMaz} can be used in this setting as the class has the joint embedding property.

\begin{lemma}\label{10elem}
If $M$ and $N$ are limit models in $\Kt$, then $M$ and $N$ are elementarily equivalent. 
\end{lemma}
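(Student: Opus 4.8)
The plan is to follow the strategy of \cite[4.2]{KuMaz}: reduce the elementary equivalence of two modules to the equality of their positive primitive invariants, and then use that limit models are universal in their own cardinality --- which holds here because $\Kt$ has the joint embedding property --- together with the L\"owenheim--Skolem axiom to show that these invariants must agree. I would first recall the Baur--Monk classification of elementary equivalence for modules (see, e.g., \cite[\S 2]{prest}): two $R$-modules $M$ and $N$ are elementarily equivalent if and only if for every pair of $pp$-formulas $\psi(x) \vdash \phi(x)$ the indices $[\phi(M):\psi(M)]$ and $[\phi(N):\psi(N)]$ coincide as elements of $\{1,2,3,\dots\}\cup\{\infty\}$. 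Since every $\s$-torsion module is an $R$-module and the relevant notion is first-order elementary equivalence, it suffices to prove that for any two limit models $M$ and $N$ of $\Kt$ and any $pp$-pair $\psi \vdash \phi$ one has $[\phi(M):\psi(M)] = [\phi(N):\psi(N)]$.

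The engine is monotonicity of these indices under pure embeddings: if $A \leq_p B$ then $\phi(A) = \phi(B)\cap A$ and $\psi(A) = \psi(B)\cap A$, so $\phi(A)/\psi(A)$ embeds into $\phi(B)/\psi(B)$ and $[\phi(A):\psi(A)] \leq [\phi(B):\psi(B)]$. Now fix a $pp$-pair $\psi \vdash \phi$ and a finite $n$, and suppose $[\phi(M):\psi(M)] \geq n$, witnessed by $a_1,\dots,a_n \in \phi(M)$ lying in pairwise distinct $\psi(M)$-cosets. By L\"owenheim--Skolem for $\Kt$ there is $A \leq_p M$ with $a_1,\dots,a_n \in A$ and $\|A\| = \LS(\Kt) = |R|+\aleph_0 \leq \|N\|$; by purity $[\phi(A):\psi(A)] \geq n$. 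Using that $\Kt$ has no maximal models, enlarge $A$ to $A^+ \in \Kt$ with $A \leq_p A^+$ and $\|A^+\| = \|N\|$. Since $\Kt$ has joint embedding, the limit model $N$ is universal in $(\Kt)_{\|N\|}$ by \cite[2.10]{maz20}, so there is a pure embedding $A^+ \hookrightarrow N$; chasing the inequalities along $A \leq_p A^+ \leq_p N$ gives $[\phi(N):\psi(N)] \geq n$. By the symmetry of the roles of $M$ and $N$, one gets $[\phi(M):\psi(M)] \geq n$ if and only if $[\phi(N):\psi(N)] \geq n$ for every finite $n$, hence the two indices are equal in $\{1,2,3,\dots\}\cup\{\infty\}$. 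Ranging over all $pp$-pairs and invoking Baur--Monk yields $M \equiv N$.

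The step that needs care --- and the reason this is not the one-line observation that two limit models purely embed into each other --- is that $M$ and $N$ may have different cardinalities, so there need not be a pure embedding of the larger into the smaller. The L\"owenheim--Skolem / no-maximal-models detour is precisely what substitutes for that missing embedding, transporting a finite witness down to a model of size $\LS(\Kt)$ and then back up into the other limit model via universality. No hypothesis beyond those already standing is needed; in particular right semihereditariness of $R$ plays no role here, only that $\Kt$ is an AEC of $R$-modules with pure embeddings having amalgamation, joint embedding, and no maximal models.
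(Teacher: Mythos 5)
Your proof is correct and follows essentially the same route as the paper, which simply invokes the argument of \cite[4.2]{KuMaz}: that argument is exactly the Baur--Monk reduction to $pp$-invariants combined with monotonicity of the indices under pure embeddings and the universality of limit models (via joint embedding), with the L\"owenheim--Skolem step handling the possible difference in cardinalities. You have just written out in full the proof the paper cites.
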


\begin{theorem}\label{ss}
Assume Hypothesis \ref{10hyp2} and that $R_R$ is not absolutely pure.  The following are equivalent.
\begin{enumerate}
\item $\Kt$ is superstable.
\item There exists a $\lambda \geq (|R| + \aleph_0)^+$ such that $\Kt$ has uniqueness of limit models of cardinality $\lambda$.
\item Every limit model in $\Kt$ is $\Sigma$-$\Ks$-pure-injective.
\item Every $M \in \Ks$ is $\Sigma$-$\Ks$-pure-injective.
\item Every $M \in \Ks$ is $\Ks$-pure-injective.
\item For every $\lambda \geq |R| + \aleph_0$, $\Kt$ has uniqueness of limit models of cardinality $\lambda$.
\item  For every $\lambda \geq |R| + \aleph_0$, $\Kt$ is $\lambda$-stable.
\end{enumerate}
\end{theorem}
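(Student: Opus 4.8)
The strategy is to prove a cycle of implications, routing everything through the algebraic characterization of $\Ks$-pure-injectivity (Lemma \ref{mod}) and the chain-condition characterization of $\Sigma$-$\Ks$-pure-injectivity (Lemma \ref{esigma}), together with the uniqueness-of-limit-models machinery already developed (Corollaries \ref{longuni}, \ref{uni}, Lemma \ref{ccountablelim}, Lemma \ref{10elem}). A natural arrangement is: $(7)\Rightarrow(6)\Rightarrow(1)\Rightarrow(2)$ (mostly formal, using Fact \ref{existence} to get $\lambda$-stability from existence of limit models and uniqueness), then the substantive arc $(2)\Rightarrow(3)\Rightarrow(4)\Rightarrow(5)\Rightarrow(6)$, and finally $(4)\Rightarrow(7)$ (or $(5)\Rightarrow(7)$) to close. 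I would first note that $(6)\Leftrightarrow(7)$ for all $\lambda$ is immediate from Fact \ref{existence} (stability $\Leftrightarrow$ existence of a limit model) plus the definition of uniqueness, and that $(6)\Rightarrow(1)$ and $(1)\Rightarrow(2)$ are trivial by the definition of superstability (Definition \ref{defss}).

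The key implication is $(2)\Rightarrow(3)$: from uniqueness of limit models at a single $\lambda\geq\rc$ I want to conclude every limit model is $\Sigma$-$\Ks$-pure-injective. Here is where I would use Corollary \ref{blim}: a $(\lambda,\alpha)$-limit model with $\cof(\alpha)\geq\rc$ is already $\Ks$-pure-injective, and by uniqueness ALL $\lambda$-limit models are isomorphic to it, hence $\Ks$-pure-injective; in particular the $(\lambda,\omega)$-limit model $M$ is $\Ks$-pure-injective. Then invoke Lemma \ref{ccountablelim}: $M\cong N^{(\aleph_0)}$ where $N$ is a $(\lambda,\rc)$-limit model; but also $M\cong N$ by uniqueness, so $N\cong N^{(\aleph_0)}$, and iterating (or directly, since $N$ is $\Ks$-pure-injective and $N\cong N^{(\aleph_0)}$ forces $N^{(I)}$ to be a summand-controlled object) one gets $N^{(\aleph_0)}$, hence $N$, is $\Ks$-pure-injective. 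To upgrade to $\Sigma$-$\Ks$-pure-injectivity I would argue via Lemma \ref{esigma}(2): it suffices that $N^{(\aleph_0)}$ is $\Ks$-pure-injective, which we have. This shows the $(\lambda,\rc)$-limit model $N$ is $\Sigma$-$\Ks$-pure-injective, and since every limit model at every cardinality is isomorphic to a direct sum / elementarily equivalent to $N$ (Lemma \ref{10elem}, Corollary \ref{sigma} third bullet), every limit model is $\Sigma$-$\Ks$-pure-injective. The delicate point — and the main obstacle — is handling the countable-cofinality case cleanly and making sure the "$N\cong N^{(\aleph_0)}$" juggling actually delivers $\Sigma$-pure-injectivity rather than just pure-injectivity; I expect this to require care with Lemma \ref{esigma} and possibly an appeal to the fact that a $\Ks$-pure-injective module isomorphic to its own countable power has the low-pp descending chain condition (otherwise one builds a strictly descending chain on $M^{(\aleph_0)}$ as in the proof of $(2)\Rightarrow(3)$ of Lemma \ref{esigma}).

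For $(3)\Rightarrow(4)$: every module $M\in\Ks$ purely embeds into a limit model $N$ of suitable size (limit models are universal in $(\Kt)_\lambda$, and one can take $\lambda\geq\|M\|+\LS(\Kt)$), so $M\leq_p N$ with $N$ $\Sigma$-$\Ks$-pure-injective, and then $M$ is $\Sigma$-$\Ks$-pure-injective by Corollary \ref{sigma} (second bullet). The implication $(4)\Rightarrow(5)$ is the first bullet of Corollary \ref{sigma}. For $(5)\Rightarrow(6)$: if every $\s$-torsion module is $\Ks$-pure-injective, then in particular all $\lambda$-limit models are $\Ks$-pure-injective, so by Corollary \ref{uni} any two $\lambda$-limit models are isomorphic, giving uniqueness at every $\lambda\geq|R|+\aleph_0$ (existence of limit models holds since the class is $\lambda$-stable for a proper class of $\lambda$, e.g. those with $\lambda^{|R|+\aleph_0}=\lambda$, and then uniqueness propagates — though for the clean statement one should note stability at all $\lambda$ follows too). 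Finally $(4)\Rightarrow(7)$: if every $\s$-torsion module is $\Sigma$-$\Ks$-pure-injective then in particular Galois-types over any $M\in(\Kt)_\lambda$ correspond to pp-types (Corollary \ref{gtp=pp}), and the $\Sigma$-$\Ks$-pure-injectivity / low-pp descending chain condition bounds the number of pp-types over $M$ by $\lambda$; this is the standard Garavaglia-style counting argument (as in the classical superstable case), giving $\lambda$-stability for every $\lambda\geq|R|+\aleph_0$. The hypothesis that $R_R$ is not absolutely pure is used only to ensure (via the Fact after Hypothesis \ref{10hyp1} and Proposition \ref{idem}) that $\Ks$ is nontrivial, so limit models genuinely exist and the equivalences are not vacuous.
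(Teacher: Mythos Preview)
Your overall architecture matches the paper's, and your handling of $(2)\Rightarrow(3)$, $(3)\Rightarrow(4)$, $(4)\Rightarrow(5)$, and the counting argument for $(4)\Rightarrow(7)$ is essentially what the paper does. There is, however, a genuine gap: you assert that $(7)\Rightarrow(6)$ is ``mostly formal'' and that ``$(6)\Leftrightarrow(7)$ is immediate from Fact \ref{existence} plus the definition of uniqueness.'' That is false. Fact \ref{existence} gives $\lambda$-stability $\Leftrightarrow$ \emph{existence} of a $\lambda$-limit model; it says nothing about uniqueness. So $(6)\Rightarrow(7)$ is indeed immediate (uniqueness entails existence entails stability), but $(7)\Rightarrow(6)$ is precisely the hard direction, and nothing in your plan establishes it. Since you only connect $(7)$ to the rest of the cycle via this claimed implication and via the outgoing arrow $(4)\Rightarrow(7)$, item $(7)$ is left dangling.

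The paper closes this gap with a direct $(7)\Rightarrow(4)$ argument by contraposition: assume some $M\in\Ks$ fails to be $\Sigma$-$\Ks$-pure-injective, use Lemma \ref{esigma} to extract a strictly descending chain $\phi_n[M]$ with $\phi_0$ low, pass to $M^{(\lambda)}$ with $\lambda=\beth_\omega(|R|+\aleph_0)$ so each successive index is $\lambda$, and then build a tree of types $\Phi_\eta$ for $\eta\in\lambda^\omega$ (each finitely consistent, each containing $\phi_0(x)$ so the realizers land in $\s(M_\eta)$) giving $\lambda^{\aleph_0}>\lambda$ many Galois-types over a model of size $\lambda$ --- contradicting $\lambda$-stability. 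You should also tighten $(5)\Rightarrow(6)$: your remark that ``uniqueness propagates'' from a proper class of cardinals is not an argument. The clean route is to note $(5)\Rightarrow(4)$ trivially (every $M^{(\aleph_0)}$ is in $\Ks$, hence $\Ks$-pure-injective by $(5)$), then invoke your $(4)\Rightarrow(7)$ counting to get stability and hence existence of limit models at every $\lambda$, and combine with Corollary \ref{uni} for uniqueness. This is exactly what the paper does inside its $(5)\Rightarrow(6)$ step.
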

\begin{proof}
$(1) \Rightarrow (2)$: Clear.

$(2) \Rightarrow (3)$: The proof is similar to that of (2) to (3) of \cite[3.15]{maz2}. The reason that argument goes through in $\Kt$ is because of Lemma \ref{ccountablelim}, Lemma \ref{blim}, Lemma \ref{esigma}, Lemma \ref{10elem}, and Corollary \ref{sigma}.

$(3) \Rightarrow (4)$: Follows from Corollary \ref{sigma} and the fact that limit models are universal.

$(4) \Rightarrow (5)$: Follows from Corollary \ref{sigma}.

$(5) \Rightarrow (6)$: By Corollary \ref{uni} for every cardinal  $\lambda \geq |R| + \aleph_0$ there is at most one $\lambda$-limit model up to isomorphisms, so we only need to show existence. We show that $\Kt$ is $\lambda$-stable for every $\lambda \geq |R| + \aleph_0$, this is enough by Fact \ref{existence}. Let $\lambda \geq |R| + \aleph_0$ and $M \in \Kt_\lambda$.

Let $N \in \Kt$ and $\{ a_i : i < \kappa \} \subseteq N$ such that $\{ \gtp(a_i/ M; N) : i < \kappa \}$ is an enumeration without repetitions of $\gS(M)$. Let $\Delta :=\{ pp(a_i/M, N) : i < \kappa \}$ and observe that $| \gS(M) | \leq | \Delta |$ since $\Phi: \gS(M) \to \Delta$ given by $\Phi(\gtp(a_i/ M; N))=pp(a_i/M, N)$ is injective by Lemma \ref{gtp=pp}.

Since $N$ is $\Sigma$-$\Ks$-pure-injective by $(5)$, it follows from Lemma \ref{esigma} that $N$ has the low-pp descending chain condition. Then it follows, as in Lemma \ref{pi-chain}, that for every $p \in \Delta$ there is $\psi_{p} \in p$ such that for every $\theta \in p$ and $c \in N$, $N \vDash \psi_p(c) \to \theta(c)$. Let $\Psi: \Delta \to \{ \phi(x, \bar{m}) : \phi(x, \bar{y}) \text{ is a }pp\text{-formula and } \bar{m} \in M \}$ be given by $\Psi(p)=\psi_p$. It is easy to show that $\Psi$ is injective and as $|\{ \phi(x, \bar{m}) : \phi(x, \bar{y}) \text{ is a }pp\text{-formula and } \bar{m} \in M \}|=(|R| + \aleph_0)\lambda=\lambda$, we can conclude that $|\Delta | \leq \lambda$. Therefore,  $| \gS(M) | \leq \lambda$.

$(6) \Rightarrow (1)$: Clear. 

$(6) \Rightarrow (7)$: Clear.

$(7) \Rightarrow (4)$: Assume for the sake of contradiction that there is $M \in \Ks$ which is not $\Sigma$-$\Ks$-pure-injective. It follows from Lemma \ref{esigma} that there is a set of formulas $\{ \phi_n(x) \}_{n \in \omega}$ such that $\phi_0(x)$ is low and $\phi_n(x)$ is a $pp$-formula for every $n \in \omega$ such that $\phi_n[M] \supset \phi_{n+1}[M]$ for every $n \in \omega$.

Let $\lambda = \beth_\omega( |R| + \aleph_0)$. Observe that since $[ \phi_n[M] : \phi_{n+1}[M] ] \geq 2$, it follows that $[ \phi_n[M^{(\lambda)}] : \phi_{n+1}[M^{(\lambda)}] ] = \lambda$ for each $n \in \omega$ and $M^{(\lambda)} \in \Ks$ by Remark \ref{c-sum}. For every $n\in \omega$ pick $\{ a_{n, \alpha} :  \alpha < \lambda \} \subseteq  M^{(\lambda)}$ a complete set of representatives of $\phi_n[M^{(\lambda)}]/ \phi_{n+1}[M^{(\lambda)}]$.

 Let $A = \bigcup_{n < \omega} \{ a_{n, \alpha} :  \alpha < \lambda \}$ and $N$ be a structure obtained by applying the downward L\"{o}wenheim-Skolem Theorem to $A$ in $M^{(\lambda)}$. It is clear that $N \in \Kt_\lambda$. For every $\eta \in \lambda^\omega$, let $\Phi_\eta=\{ \phi_{n+1}(x - \Sigma_{i=0}^n a_{i, \eta(i)}) : n < \omega \} \cup \{ \phi_0(x)\}$. $\Phi_\eta$ is a $Th(M^{(\lambda)})$-type over $N$, so pick $M_\eta \succeq M^{(\lambda)}$ and $c_{\eta} \in M_\eta$ realizing $\Phi_\eta$. It is clear that $c_\eta \in \s(M_\eta)$ so consider $q_\eta= \gtp(c_\eta/N; \s(M_\eta))$. 
 
 Using Lemma \ref{gtp=pp} and that $\s(M_\eta)\leq_p M_\eta$ for every $\eta \in \lambda^\omega$, it can be shown that if $\eta_1 \neq \eta_2 \in \lambda^\omega$ then $q_{\eta_1} \neq q_{\eta_2}$. Hence $| \gS(N) | \geq \lambda^{\aleph_0} > \lambda$ by the choice of $\lambda$ and K\"{o}nig's lemma. This contradicts our assumption that $\Kt$ was $\lambda$-stable.  \end{proof}
 
 \begin{remark}
 The equivalence between $(4)$ and $(7)$ of the above theorem is a natural extension of a result of Garavaglia and Macintyre \cite[Theo 1]{gar}, \cite[3.11]{prest}.
 \end{remark}
 
 We give an example of a ring satisfying the conditions of Theorem \ref{ss}. This example is due to M. Prest. 
 
 \begin{example}\label{E:1}
 Let $R$ be the ring of $n \times n$ upper triangular matrices over a countable field $k$ with $n \geq 2$. $R$ is left and right hereditary \cite[2.36]{lam2}, thus Hypothesis \ref{10hyp2} holds by Remark \ref{R: imp}. $R_R$ is not absolutely pure as that would imply that $R$ is right self-injective because $R$ is right pure-semisimple \cite[4.5.18]{prest09}, but $R$ is not right self-injective \cite[3.10B]{lam2}. $R$ is left pure-semisimple \cite[4.5.18]{prest09}, therefore every left module is pure-injective and condition (5) of Theorem \ref{ss} holds. 
 \end{example}
Previous results that characterised superstability in classes of modules always corresponded to classical rings \cite{m2}, \cite{m3}, \cite{maz2}. In this case we do not know if that is the case. If $R$ is a left pure-semsimple ring such that $R$ is right semihereditary and $R_R$ is not absolutely pure, then condition (5) of Theorem \ref{ss} holds. For this reason, we think of a ring satisfying any of the conditions given in Theorem \ref{ss} as a \emph{weak} pure-semisimple ring. A natural question to ask is the following:

\begin{question}\label{q1}
Assume $R$ is right semihereditary, $R_R$ is not absolutely pure and any of the equivalent conditions of Theorem \ref{ss} hold. Is $R$ left pure-semisimple?
\end{question}
 
Even if superstability for $\s$-torsion modules with pure embeddings does not correspond to a classical ring, Theorem \ref{ss} is still interesting as it can be used to show that certain classes are not superstable. An example of this is given in the next section. 

\begin{remark}
Assume $R$ is a Pr\"{u}fer domain.\footnote{Recall that the class of $\s$-torsion modules coincides with the class of $R$-torsion modules by \cite[2.2]{maru}.} Recall the following notion introduced in \cite[XIII.\S 6]{fl}. $M$ is \emph{torsion-ultracomplete} if for every module $N$, if $M\leq_p N$ and $N/M \in \Ks$, then $M$ is a direct summand of $N$. It follows from Theorem \ref{mod}, that an $\s$-torsion module is  torsion-ultracomplete if and only if it is $\Ks$-pure-injective. A finer question than Question \ref{q1} would be to determine if there is a Pr\"{u}fer domain which is not a field such that every torsion module is torsion-ultracomplete.
\end{remark}




Finally, a natural question is if any of the results presented in this section can be extended to rings where Hypothesis \ref{10hyp2} fails. We think it is unlikely. However, we think that if one studies the class of $\s$-torsion modules with respect to other embeddings it is possible to obtain  analogous results to the ones presented here for rings that not necessarily satisfy Hypothesis \ref{10hyp2}.

\section{Torsion abelian groups}

In this section we apply our general results to the class of torsion abelian groups with pure embeddings. 
We show it is strictly stable, characterize its stability cardinals, and describe its limit models. We will denote the class of torsion abelian groups with pure embeddings by $\Kabt$.

By Remark \ref{R: tor} and the fact that the torsion subgroup is a pure subgroup, the previous results apply to $\Kabt$ where we write $t(G)$ for $\s(G)$. 

The following fact collects what is known of the class of torsion abelian groups with pure embeddings. They were first obtained in \cite[\S 4]{m4}, but they also follow from the results of the previous section.

\begin{fact}\label{basic}  Let $\Kabt = (K^{\text{Tor}}, \leq_p)$.
\begin{itemize}
\item $\Kabt$ is an AEC with $\LS(\Kabt)=\aleph_0$ that has amalgamation, joint embedding, and no maximal models.
\item If $\lambda^{\aleph_0}=\lambda$, then $\Kabt$ is $\lambda$-stable.
\item $\Kabt$ is $(<\aleph_0)$-tame.
\end{itemize}
\end{fact}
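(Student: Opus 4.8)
The plan is to obtain all three bullets as direct specializations of the general theory of Sections 3 and 4 to the ring $R = \mathbb{Z}$, so that the only genuine work is to confirm that the standing hypotheses apply and to track what the quantity $|R| + \aleph_0$ becomes. As recorded in the Remark preceding the statement, the class of $\s$-torsion $\mathbb{Z}$-modules coincides with the class $K^{\text{Tor}}$ of torsion abelian groups, so $\Kabt = \Kt$ when $R = \mathbb{Z}$. First I would verify the two standing hypotheses. Hypothesis \ref{10hyp1} (non-triviality of $\Ks$) holds because, for instance, $\mathbb{Z}/p\mathbb{Z}$ is a non-zero torsion abelian group. Hypothesis \ref{10hyp2} (right semihereditarity) holds because $\mathbb{Z}$ is a PID, hence a Pr\"{u}fer domain, hence semihereditary. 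Observe that $|R| + \aleph_0 = \aleph_0$ for $R = \mathbb{Z}$.

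Granting these, the first two bullets follow from the Fact recording the basic properties of $\Kt$ at the start of Section 4. Item (1) there gives that $\Kt$ is an AEC with $\LS(\Kt) = |R| + \aleph_0 = \aleph_0$, and item (2) gives amalgamation, joint embedding, and no maximal models; together these are exactly the content of the first bullet. For the second bullet, item (3) states that $\Kt$ is $\lambda$-stable whenever $\lambda^{|R| + \aleph_0} = \lambda$, which for $R = \mathbb{Z}$ reads $\lambda^{\aleph_0} = \lambda$, precisely as asserted.

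The third bullet is the ``in particular'' clause of Corollary \ref{gtp=pp}, which asserts that $\Kt$ is $(<\aleph_0)$-tame under the standing hypotheses; specializing to $R = \mathbb{Z}$ gives that $\Kabt$ is $(<\aleph_0)$-tame. Since every ingredient is quoted directly from the previous sections, I do not expect a real obstacle here: the only points requiring care are the bookkeeping identification $\Kabt = \Kt$ and the verification that $\mathbb{Z}$ satisfies every standing hypothesis of Sections 3 and 4, both of which are immediate.
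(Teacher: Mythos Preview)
Your proposal is correct and matches the paper's own justification: the paper simply notes that these properties were first obtained in \cite[\S 4]{m4} but also follow from the results of the previous section, and you have spelled out exactly that derivation by specializing the basic Fact at the start of Section~4 and Corollary~\ref{gtp=pp} to $R=\mathbb{Z}$. One minor remark: the results you invoke (the basic Fact and Corollary~\ref{gtp=pp}) lie in \S4.1 and require only Hypothesis~\ref{10hyp1}, not the semihereditary Hypothesis~\ref{10hyp2}, so your verification of the latter is unnecessary here, though of course harmless.
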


We will use the following algebraic result to show that the class is not superstable.

\begin{remark}[{\cite[\S 10.3]{fuchs}}]\label{rem-ab}
Let $B_n= \mathbb{Z}(p^n)^{(\lambda)}$ and $B=\bigoplus_n B_n$. 
\begin{equation}
g=(b_n)_{n \in \omega} \in t(PE(B))\leq \Pi_n B_n \text{ if and only if the orders of } \{b_n\}_{n \in \omega} \text{ are bounded.}
\end{equation}

Thus $\| t(PE(B)) \| = \lambda^{\aleph_0}$ as $|B_n[p]|= |\{ b \in B_n : pb =0 \} | = \lambda$ for every $n \in \omega$.

\end{remark}

\begin{lemma}\label{nss}
$\Kabt$ is not superstable. Hence, $\Kabt$ is strictly stable.
\end{lemma}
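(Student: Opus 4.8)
The plan is to use Theorem \ref{ss} in its contrapositive form: to show $\Kabt$ is not superstable it suffices to exhibit a single torsion abelian group that is not $\Ks$-pure-injective (equivalently, by the equivalence of $(1)$ and $(5)$ there, a witness to the failure of condition $(5)$). Since $\mathbb{Z}$ is a Pr\"{u}fer domain, hence right semihereditary, and $\mathbb{Z}_{\mathbb{Z}}$ is certainly not absolutely pure, the hypotheses of Theorem \ref{ss} are met, so all seven conditions are equivalent.

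First I would fix a prime $p$ and take $B_n = \mathbb{Z}(p^n)^{(\lambda)}$ and $B = \bigoplus_n B_n$ as in Remark \ref{rem-ab}, for a convenient $\lambda$ (e.g. $\lambda = \aleph_0$ already suffices, since then $\lambda^{\aleph_0} = 2^{\aleph_0} > \aleph_0 = \lambda$). The group $B$ is torsion, so $B \in \Ks$. By Remark \ref{rem-ab}, $t(PE(B)) \leq \Pi_n B_n$ consists of the bounded-order sequences, and $\|t(PE(B))\| = \lambda^{\aleph_0}$. Now I would invoke Lemma \ref{mod}: if $B$ were $\Ks$-pure-injective, then by the equivalence $(2) \Leftrightarrow (3)$ there we would have $B = \s(PE(B)) = t(PE(B))$, forcing $\|B\| = \|t(PE(B))\| = \lambda^{\aleph_0}$. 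But $\|B\| = \lambda$ (for $\lambda$ infinite, $\|B\| = \aleph_0 \cdot \lambda = \lambda$), and choosing $\lambda$ with $\lambda^{\aleph_0} > \lambda$ (any $\lambda$ of countable cofinality, or just $\lambda = \aleph_0$) yields a contradiction. Hence $B$ is not $\Ks$-pure-injective, so condition $(5)$ of Theorem \ref{ss} fails, and therefore $\Kabt$ is not superstable.

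To conclude that $\Kabt$ is \emph{strictly} stable, I would combine this with Fact \ref{basic}, which gives that $\Kabt$ is $\lambda$-stable whenever $\lambda^{\aleph_0} = \lambda$ — in particular for a proper class of cardinals — so $\Kabt$ is stable. Stable but not superstable is exactly strictly stable, which is the claim.

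I do not expect a genuine obstacle here: the real content is packaged into Theorem \ref{ss} and Remark \ref{rem-ab}, and the argument is just a cardinality count played off against the characterization $B = \s(PE(B))$ from Lemma \ref{mod}. The only point requiring a little care is making sure the chosen $\lambda$ satisfies $\lambda^{\aleph_0} > \lambda$ while keeping $\|B\| = \lambda$; taking $\lambda = \aleph_0$ (or, if one wants arbitrarily large witnesses, $\lambda = \beth_\omega$ or any $\lambda$ with $\cof(\lambda) = \aleph_0$) handles this cleanly.
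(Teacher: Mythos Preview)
Your proposal is correct and essentially identical to the paper's own proof: both exhibit $B=\bigoplus_n \mathbb{Z}(p^n)^{(\lambda)}$ as a torsion group that is not $\Ks$-pure-injective, via the cardinality mismatch $\|B\|=\lambda<\lambda^{\aleph_0}=\|t(PE(B))\|$ from Remark~\ref{rem-ab} and Lemma~\ref{mod}.(3), and then conclude via Theorem~\ref{ss}. The paper picks $\lambda=\beth_\omega$ (so K\"onig's lemma gives the strict inequality), while you note that $\lambda=\aleph_0$ already suffices; either choice works.
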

\begin{proof}
Assume for the sake of contradiction that $\Kabt$ is superstable.  Let $\lambda= \beth_{\omega}$ and $B=\bigoplus_n B_n$ where $B_n= \mathbb{Z}(p^n)^{(\lambda)}$  for every $n < \omega$ as in Remark \ref{rem-ab}. Then by Theorem \ref{ss}.(5) and Theorem \ref{mod}.(3), it follows that $B = t(PE(B))$. This is a contradiction as $\| t(PE(B)) \| = \lambda^{\aleph_0} > \lambda$ by K\"{o}nig's lemma.
\end{proof}

\begin{remark}
The previous result contrasts with the fact that the class of torsion abelian groups with embedding  is superstable \cite[4.8]{m4}.
\end{remark}

We are actually able to obtain a complete characterization of the stability cardinals.

\begin{lemma}\label{sta}
$\Kabt$ is $\lambda$-stable if and only if $\lambda^{\aleph_0}=\lambda$.
\end{lemma}
\begin{proof}
The backward direction follows from Fact \ref{basic} so we show the forward direction. We divide the proof into two cases:

\underline{Case 1:} $\lambda > \aleph_0$. Assume that $\Kabt$ is $\lambda$-stable. Let $M$ be a $(\lambda, \omega_1)$-limit model and $B=\bigoplus_n B_n$ where $B_n= \mathbb{Z}(p^n)^{(\lambda)}$  for every $n < \omega$ as in Remark \ref{rem-ab}.  Since $M$ is a $\lambda$-limit model and $B$ has size $\lambda$ there is a pure embedding $f: B \to M$.  Then there is $g: PE(B) \to PE(M)$ pure embedding extending $f$ by the minimality of $PE(B)$.

In particular, $g\rest_ {t(PE(B))}:  t(PE(B)) \to t(PE(M))$ is injective. So $\| t(PE(B)) \| \leq \| t(PE(M)) \|$. Since $t(PE(M))=M$ by Corollary \ref{blim} and Theorem \ref{mod} and $\| t(PE(B)) \| = \lambda^{\aleph_0}$ by Remark \ref{rem-ab}, it follows that $\lambda = \lambda^{\aleph_0}$

\underline{Case 2:} $\lambda = \aleph_0$. Assume for the sake of contradiction that $\Kabt$ is $\omega$-stable. Since $\Kabt$ is $(<\aleph_0)$-tame by Fact \ref{basic}, it follows from \cite[3.6]{bkv} that $\Kabt$ is  $\beth_\omega$-stable. This contradicts the previous case as $\beth_\omega^{\aleph_0} > \beth_\omega$ by K\"{o}nig's lemma.
\end{proof}

From the above results we can precisely describe the spectrum function for limit models.
\begin{cor}
 If $\lambda^{\aleph_0}= \lambda$, then $\Kabt$ has two non-isomorphic $\lambda$-limit models. For every other cardinal $\lambda$, $\Kabt$ has no $\lambda$-limit models.
\end{cor}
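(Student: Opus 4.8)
The final statement to prove is the corollary describing the limit model spectrum for $\Kabt$:

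\medskip

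\noindent\textbf{Plan of proof.} The plan is to deduce everything from Theorem \ref{sta} together with the general analysis of limit models carried out in Section 4. First suppose $\lambda^{\aleph_0} = \lambda$. By Theorem \ref{sta} the class $\Kabt$ is $\lambda$-stable, so by Fact \ref{existence} it has a $\lambda$-limit model; in fact, running through the limit ordinals $\alpha < \lambda^+$, it has a $(\lambda,\omega)$-limit model $M_\omega$ and a $(\lambda, (|R|+\aleph_0)^+)$-limit model $M_{\aleph_1}$ (here $R = \mathbb{Z}$, so $(|R|+\aleph_0)^+ = \aleph_1$). The first task is to show these two are \emph{not} isomorphic. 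By Lemma \ref{ccountablelim}, $M_\omega \cong M_{\aleph_1}^{(\aleph_0)}$. On the other hand, by Corollary \ref{blim} and Lemma \ref{mod}, the long limit model $M_{\aleph_1}$ is $\Ks$-pure-injective, hence equals $t(PE(M_{\aleph_1}))$. If $M_\omega \cong M_{\aleph_1}$ were true, then $M_{\aleph_1}$ would be isomorphic to $M_{\aleph_1}^{(\aleph_0)}$, and in particular $M_{\aleph_1}^{(\aleph_0)}$ would be $\Ks$-pure-injective; since $M_{\aleph_1}$ is a pure submodule of $M_{\aleph_1}^{(\aleph_0)}$ and the latter is a direct sum, one can leverage the argument of Theorem \ref{ss} (specifically the implications showing $\Sigma$-$\Ks$-pure-injectivity fails in the presence of a strictly descending low-pp chain) to derive a contradiction with $\lambda^{\aleph_0} > \lambda$ being false—but here $\lambda^{\aleph_0} = \lambda$, so this naive route does not immediately close. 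The cleaner route: appeal directly to the cardinality computation as in Lemma \ref{nss} and the proof of Theorem \ref{sta}, Case 1, applied to a limit model of length $\omega_1$ versus one of length $\omega$, comparing $\| t(PE(B)) \|$ for the standard $B = \bigoplus_n \mathbb{Z}(p^n)^{(\lambda)}$ purely embedded in each.

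\medskip

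More precisely, I would argue: the $(\lambda, \omega)$-limit model and the $(\lambda,\omega_1)$-limit model cannot be isomorphic because, were $\Kabt$ to have uniqueness of $\lambda$-limit models, then by the equivalences in Theorem \ref{ss} (conditions (2) and (7)) $\Kabt$ would be $\mu$-stable for every $\mu \geq \aleph_0$, contradicting Theorem \ref{sta}, which says $\mu$-stability forces $\mu^{\aleph_0} = \mu$. Thus there are at least two non-isomorphic $\lambda$-limit models. For the upper bound of exactly two, I would invoke the structure theory: Corollary \ref{longuni} shows all limit models with chains of cofinality $\geq \aleph_1$ are isomorphic to each other, Corollary \ref{uni} handles uniqueness among $\Ks$-pure-injective limit models, and Lemma \ref{ccountablelim} pins down the $(\lambda,\omega)$-limit model as the $\aleph_0$-th power of the long one; since every limit ordinal $\alpha < \lambda^+$ has cofinality either $\omega$ or some uncountable cardinal, the corresponding limit model falls into exactly one of these two isomorphism classes, giving precisely two.

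\medskip

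Finally, for the "moreover" clause: if $\lambda^{\aleph_0} \neq \lambda$, then by Theorem \ref{sta} the class $\Kabt$ is not $\lambda$-stable, so by Fact \ref{existence} (which holds since $\Kabt$ has amalgamation, joint embedding, and no maximal models by Fact \ref{basic}) it has no $\lambda$-limit model at all. This disposes of all remaining cardinals.

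\medskip

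\noindent\textbf{Main obstacle.} The delicate point is establishing that the $(\lambda,\omega)$-limit model and a longer limit model are genuinely non-isomorphic when $\lambda^{\aleph_0} = \lambda$, since then the crude cardinality obstruction used in Lemma \ref{nss} is unavailable. The right move is not to compare cardinalities of pure-injective envelopes directly but to note that isomorphism of all $\lambda$-limit models would, via Theorem \ref{ss}, propagate to $\mu$-stability for all $\mu \geq \aleph_0$ and hence contradict Theorem \ref{sta} at some $\mu$ with $\mu^{\aleph_0} > \mu$. Once that is in hand, the count of exactly two classes is a bookkeeping exercise over the possible cofinalities of $\alpha$, using the already-proven uniqueness statements.
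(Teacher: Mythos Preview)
Your final argument is correct and is essentially the paper's proof: at least two non-isomorphic $\lambda$-limit models come from observing that uniqueness at $\lambda$ would trigger condition (2) of Theorem \ref{ss} and hence superstability, contradicting Lemma \ref{nss} (equivalently Theorem \ref{sta}); at most two classes follows from Corollary \ref{longuni} for uncountable cofinalities together with the standard fact that limit models of the same cofinality are isomorphic; and the ``moreover'' clause is exactly Theorem \ref{sta} plus Fact \ref{existence}. Your initial attempt to distinguish $M_\omega$ from $M_{\aleph_1}$ by a direct cardinality comparison is, as you yourself note, a dead end when $\lambda^{\aleph_0}=\lambda$, and can simply be deleted --- the paper goes straight to the superstability contradiction.
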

\begin{proof}
Assume $\lambda^{\aleph_0}= \lambda$. Then the $(\lambda, \omega)$-limit model and the $(\lambda, \omega_1)$-limit model are not isomorphic by Corollary 4.11 and Theorem \ref{ss}.(2). Moreover, given $M$ a $(\lambda, \alpha)$-limit model, $M$ is isomorphic to the $(\lambda, \omega)$-limit model if $\cof(\alpha) =\omega$ by a back-and-forth argument and to the $(\lambda, \omega_1)$-limit model if $\cof(\alpha) > \omega$ by Corollary 4.11. The second part follows from Lemma \ref{sta} and Fact \ref{existence}.
\end{proof}

We go one step further and give an algebraic description of the limit models.  Recall that given $n \in \mathbb{N}$ and $G$ an abelian group, $G[n]$ denotes the elements annihilated by $n$ in $G$ and $nG$ denotes the elements of the form $ng$ for some $g$ in $G$.

\begin{lemma}
Let $\lambda$ be an infinite cardinal such that $\lambda^{\aleph_0}=\lambda$ and $\alpha < \lambda^+$ be a limit ordinal. If $M$ is a $(\lambda, \alpha)$-limit model in $\Kabt$, then:
\begin{enumerate}
\item If $\cof(\alpha) >  \omega$, then $M \cong t(\Pi_p PE(\bigoplus_n \mathbb{Z}(p^n)^{(\lambda)})) \oplus \bigoplus_p \mathbb{Z}(p^\infty)^{(\lambda)}$.

\item If $\cof(\alpha)=\omega$, then $M \cong t(\Pi_p PE(\bigoplus_n \mathbb{Z}(p^n)^{(\lambda)}))^{(\aleph_0)} \oplus \bigoplus_p \mathbb{Z}(p^\infty)^{(\lambda)}$.
\end{enumerate}

\end{lemma}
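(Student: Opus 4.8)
The plan is to pin the limit model down explicitly. First, clause (2) reduces to clause (1): if $M$ is a $(\lambda,\omega)$-limit model then $M\cong N^{(\aleph_0)}$ for some $(\lambda,\omega_1)$-limit model $N$ by Lemma \ref{ccountablelim}, and since $\bigl(\bigoplus_p\mathbb{Z}(p^\infty)^{(\lambda)}\bigr)^{(\aleph_0)}\cong\bigoplus_p\mathbb{Z}(p^\infty)^{(\lambda)}$, once we know $N\cong N_0:=t\bigl(\Pi_p PE(\bigoplus_n\mathbb{Z}(p^n)^{(\lambda)})\bigr)\oplus\bigoplus_p\mathbb{Z}(p^\infty)^{(\lambda)}$ we get $M\cong N_0^{(\aleph_0)}\cong t\bigl(\Pi_p PE(\bigoplus_n\mathbb{Z}(p^n)^{(\lambda)})\bigr)^{(\aleph_0)}\oplus\bigoplus_p\mathbb{Z}(p^\infty)^{(\lambda)}$, which is clause (2). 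So it suffices to prove clause (1), namely that any $(\lambda,\alpha)$-limit model $M$ with $\cof(\alpha)\geq\omega_1$ is isomorphic to $N_0$; such an $M$ is well-defined up to isomorphism by Corollary \ref{longuni}.

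Write $B^{(p)}=\bigoplus_n\mathbb{Z}(p^n)^{(\lambda)}$. A useful preliminary observation is $t\bigl(\Pi_p PE(B^{(p)})\bigr)=\bigoplus_p t\bigl(PE(B^{(p)})\bigr)$: each $PE(B^{(p)})$ is a $\mathbb{Z}_{(p)}$-module (multiplication by a prime $\neq p$ is invertible on $B^{(p)}$, hence on its pure-injective hull), so a torsion element of the product is supported on finitely many coordinates. Given $M$ as above, Corollary \ref{blim} together with Lemma \ref{mod} shows $M$ is $\Ks$-pure-injective, and $M$ is universal in $(\Kabt)_\lambda$ since $\lambda$-limit models are universal. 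I would then apply the corollary preceding Corollary \ref{uni}: two $\s$-torsion $\Ks$-pure-injective modules that purely embed into one another are isomorphic. Three points remain. (a) $N_0$ is $\Ks$-pure-injective of cardinality $\lambda$: $\Pi_p PE(B^{(p)})$ is pure-injective, so its torsion part is $\Ks$-pure-injective by Lemma \ref{mod}(4); $\bigoplus_p\mathbb{Z}(p^\infty)^{(\lambda)}$ is divisible, hence pure-injective, and torsion, so it is $\Ks$-pure-injective; Proposition \ref{10add} then gives $N_0$ $\Ks$-pure-injective; and $\|t(PE(B^{(p)}))\|=\lambda^{\aleph_0}=\lambda$ by Remark \ref{rem-ab} and the hypothesis, with only countably many primes, so $\|N_0\|=\lambda$. (b) $N_0\leq_p M$: immediate from universality of $M$ and $N_0\in(\Kabt)_\lambda$. (c) $M\leq_p N_0$: this is the content below.

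For (c) it is enough to show $N_0$ is universal in $(\Kabt)_\lambda$. Decompose along primes: a torsion group $G$ with $\|G\|\leq\lambda$ is $\bigoplus_p G_p$, and pure embeddings $G_p\hookrightarrow t(PE(B^{(p)}))\oplus\mathbb{Z}(p^\infty)^{(\lambda)}$ assemble (using the preliminary observation) to a pure embedding $G\hookrightarrow N_0$, so it suffices to purely embed a single $p$-group $H$ with $\|H\|\leq\lambda$. Split off the maximal divisible subgroup, $H=D(H)\oplus H_{\mathrm{red}}$ with $D(H)\cong\mathbb{Z}(p^\infty)^{(\kappa)}$, $\kappa\leq\lambda$: then $D(H)$ is a direct summand, hence a pure submodule, of $\mathbb{Z}(p^\infty)^{(\lambda)}$. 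For $H_{\mathrm{red}}$, pick a basic subgroup $B_H=\bigoplus_n\mathbb{Z}(p^n)^{(\mu_n)}$ with $\mu_n\leq\lambda$, $B_H\leq_p H_{\mathrm{red}}$, and $H_{\mathrm{red}}/B_H$ divisible. Then $B_H$ is a direct summand of $B^{(p)}$, so $B_H\leq_p PE(B^{(p)})$; using pure-essentiality (Fact \ref{esse}) and minimality of pure-injective hulls one gets a pure embedding $PE(B_H)\leq_p PE(B^{(p)})$, and the classical fact that a reduced $p$-group purely embeds into the pure-injective hull of any of its basic subgroups (equivalently $PE(H_{\mathrm{red}})=PE(B_H)$) gives $H_{\mathrm{red}}\leq_p PE(B^{(p)})$. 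Since $H_{\mathrm{red}}$ is torsion and purity passes to intermediate submodules, $H_{\mathrm{red}}\leq_p t(PE(B^{(p)}))$. Combining the two summands yields $H\leq_p t(PE(B^{(p)}))\oplus\mathbb{Z}(p^\infty)^{(\lambda)}$, so $N_0$ is universal; with (a)--(c) in hand the uniqueness corollary gives $M\cong N_0$, and both clauses follow.

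The step I expect to be the genuine obstacle is (c), and within it the handling of an arbitrary reduced $p$-group of size $\leq\lambda$ — one that need not be a direct sum of cyclic groups — and fitting it inside $PE(B^{(p)})$ through its basic subgroup; this is where the classical structure theory of algebraically compact $p$-groups is really needed. Everything else is bookkeeping with purity together with the results of Sections 3 and 4, plus the routine cardinal arithmetic powered by the hypothesis $\lambda^{\aleph_0}=\lambda$.
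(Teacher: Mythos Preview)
Your proof is correct, but it follows a different route from the paper's. The paper argues as follows for clause (1): since $M$ is $\Ks$-pure-injective, Lemma \ref{mod}(3) gives $M=t(G)$ with $G=PE(M)$ pure-injective; it then invokes the Eklof--Fischer structure theorem for pure-injective abelian groups to write $G$ explicitly with invariants $\alpha_{p,n},\beta_p,\delta,\gamma_p$, takes the torsion part (which kills the $\mathbb{Z}_p$- and $\mathbb{Q}$-contributions), and reads off $\alpha_{p,n}=\gamma_p=\lambda$ directly from universality of $M$ by purely embedding $\mathbb{Z}(p^n)^{(\lambda)}$ and $\mathbb{Z}(p^\infty)^{(\lambda)}$.

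By contrast, you never open up $M$: you verify that the target $N_0$ is $\Ks$-pure-injective of size $\lambda$ and universal in $(\Kabt)_\lambda$, then apply the Schr\"oder--Bernstein-type corollary (two mutually purely embeddable $\Ks$-pure-injectives are isomorphic). The paper's approach is shorter because the heavy lifting is outsourced to the Eklof--Fischer classification; your approach stays closer to the abstract machinery built in Sections~3--4 (Proposition \ref{10add}, the corollary preceding Corollary \ref{uni}) but pays for it by needing the classical fact that a reduced $p$-group purely embeds in the pure-injective hull of any basic subgroup. Both are legitimate; your identification $t\bigl(\Pi_p PE(B^{(p)})\bigr)=\bigoplus_p t\bigl(PE(B^{(p)})\bigr)$ is a nice simplification that the paper leaves implicit.
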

\begin{proof}
We begin with $(1)$. By Corollary \ref{blim} and Theorem \ref{mod} we have that $M = t(G)$ for some pure-injective group $G$.  By \cite[\S 1]{ef}:

\[ G=\Pi_p PE(\bigoplus_n \mathbb{Z}(p^n)^{(\alpha_{p, n})} \oplus \mathbb{Z}_p^{(\beta_p)} ) \oplus \mathbb{Q}^{(\delta)} \oplus (\bigoplus_p \mathbb{Z}(p^\infty)^{(\gamma_p)}), \]

for some specific $\alpha_{p, n}$, $\beta_p$, $\delta$, $\gamma_p$ described in \cite[\S 1]{ef} for every $p$ a prime number and $n < \omega$. As $M$ is a torsion group, we may assume that $\beta_p = \delta =0$ for every $p$. Hence we only need to determine $\alpha_{p, n}$ and  $\gamma_p$ for every $p$ and $n < \omega$


By \cite[1.9]{ef} for every $p$ we have that $\gamma_p= dim_{\mathbb{F}_p}(D(G)[p])$ where $D(G)$ is the divisible part of $G$. Since $\mathbb{Z}(p^{\infty})^{(\lambda)}$ can be purely embedded in $M$, because  $M$ is universal in $\Kabt_\lambda$,  it can be purely embedded in $G$. Hence, $\gamma_p = \lambda$. 

By \cite[1.5]{ef} for every $p$ and $n < \omega$ we have that $\alpha_{p, n} = dim_{\mathbb{F}_p}((p^{n-1}G)[p]/ (p^n G)[p])$. Since $\mathbb{Z}(p^n)^{(\lambda)}$ can be purely embedded in $M$, because $M$ is universal in $\Kabt_\lambda$, it can be purely embedded in $G$. Hence $\alpha_{p, n} = \lambda$. 

Therefore, we can conclude that $M = t(\Pi_p PE(\bigoplus_n \mathbb{Z}(p^n)^{(\lambda)})) \oplus \bigoplus_p \mathbb{Z}(p^\infty)^{(\lambda)}$. 

Observe that $(2)$ follows directly from $(1)$ as $M$ is the direct sum of countably many copies of the $(\lambda, \omega_1)$-limit model by Lemma \ref{ccountablelim}.
\end{proof}

We finish by recording the following results for the class of abelian $p$-groups with pure embeddings. The proofs are similar to those for torsion abelian groups so we omit them. Recall that $G$ is an abelian $p$-group if every element of $G$ has order $p^n$ for some $n \in \mathbb{N}$.

\begin{lemma}
Let $p$ be a fixed prime number and denote by $\Kp$ the class of abelian $p$-groups with pure embeddings.
\begin{enumerate}
\item $\Kp$ is strictly stable.
\item $\Kp$ is $\lambda$-stable if and only if $\lambda^{\aleph_0}=\lambda$.
\item Let $\lambda$ be an infinite cardinal such that $\lambda^{\aleph_0}=\lambda$ and $\alpha < \lambda^+$ be a limit ordinal. If $M$ is a $(\lambda, \alpha)$-limit model in $\Kp$, then:
\begin{itemize}
\item If $\cof(\alpha) > \omega$, then $M \cong t(PE(\bigoplus_n \mathbb{Z}(p^n)^{(\lambda)})) \oplus \mathbb{Z}(p^\infty)^{(\lambda)}$.

\item If $\cof(\alpha)=\omega$, then $M \cong t( PE(\bigoplus_n \mathbb{Z}(p^n)^{(\lambda)}) )^{(\aleph_0)} \oplus \mathbb{Z}(p^\infty)^{(\lambda)}$.
\end{itemize}
\end{enumerate} 
\end{lemma}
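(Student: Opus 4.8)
The plan is to reduce everything to the torsion-abelian-group case already proven, since an abelian $p$-group is just a torsion abelian group whose torsion is concentrated at the single prime $p$. Concretely, $\Kp$ is the restriction of $\Kabt$ to those groups $G$ with $G = G[p^\infty]$, and all of the structural facts used for $\Kabt$ — it is an AEC with amalgamation, joint embedding, no maximal models, $\LS = \aleph_0$, and $(<\aleph_0)$-tameness — hold verbatim for $\Kp$ because the relevant ring is still $\mathbb{Z}$ (a Pr\"ufer domain, hence right semihereditary) and the defining condition "every element has order a power of $p$" is expressible by the low formulas $\{p^n x = 0 : n < \omega\}$; this is exactly the $\s_\Phi$-torsion setup of Remark \ref{comp}. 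So Theorem \ref{ss} and the limit-model machinery of Section 4 apply.

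For part (1), I would run the proof of Lemma \ref{nss} with $B = \bigoplus_n \mathbb{Z}(p^n)^{(\lambda)}$ for $\lambda = \beth_\omega$: if $\Kp$ were superstable, Theorem \ref{ss}.(5) together with Lemma \ref{mod}.(3) forces $B = t(PE(B))$, but Remark \ref{rem-ab} gives $\|t(PE(B))\| = \lambda^{\aleph_0} > \lambda$ by K\"onig's lemma, a contradiction; stability for $\lambda$ with $\lambda^{\aleph_0}=\lambda$ comes from the general Section 4 facts, so $\Kp$ is strictly stable. For part (2), the backward direction is again the general stability fact, and the forward direction copies the proof of Theorem \ref{sta}: in Case 1 ($\lambda > \aleph_0$), take a $(\lambda,\omega_1)$-limit model $M$, purely embed $B$ into it, extend to $g: PE(B) \to PE(M)$, restrict to torsion parts to get $\|t(PE(B))\| \le \|t(PE(M))\| = \|M\| = \lambda$ (using Corollary \ref{blim} and Lemma \ref{mod}), hence $\lambda^{\aleph_0} = \lambda$; in Case 2 ($\lambda = \aleph_0$), use $(<\aleph_0)$-tameness and \cite[3.6]{bkv} to bootstrap $\omega$-stability to $\beth_\omega$-stability, contradicting Case 1.

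For part (3), I would mimic the limit-model computation for $\Kabt$. By Lemma \ref{mod}, a long limit model $M$ satisfies $M = t(G)$ for a pure-injective abelian $p$-group $G$; the structure theory of pure-injective $p$-groups (from \cite{ef}, or equivalently the $p$-local part of the decomposition used in the $\Kabt$ lemma) gives $G \cong PE(\bigoplus_n \mathbb{Z}(p^n)^{(\alpha_n)} \oplus \mathbb{Z}_p^{(\beta)}) \oplus \mathbb{Q}^{(\delta)} \oplus \mathbb{Z}(p^\infty)^{(\gamma)}$, and passing to the torsion part kills the $\mathbb{Z}_p^{(\beta)}$ and $\mathbb{Q}^{(\delta)}$ summands, leaving $t(PE(\bigoplus_n \mathbb{Z}(p^n)^{(\alpha_n)})) \oplus \mathbb{Z}(p^\infty)^{(\gamma)}$. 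Since $M$ is universal in $(\Kp)_\lambda$, both $\mathbb{Z}(p^n)^{(\lambda)}$ and $\mathbb{Z}(p^\infty)^{(\lambda)}$ embed purely into $M$, hence into $G$, forcing $\alpha_n = \gamma = \lambda$ for all $n$ via the dimension formulas \cite[1.5, 1.9]{ef}; cardinality bounds give the reverse inequalities. This yields the $cf(\alpha) \ge \omega_1$ case, and the $cf(\alpha) = \omega$ case follows by Lemma \ref{ccountablelim}, which identifies the countable-cofinality limit model with the $\aleph_0$-th power of the long one.

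The main obstacle is purely expository rather than mathematical: one must check that every ingredient invoked for $\Kabt$ — the AEC axioms, amalgamation/JEP/NMM, tameness, $\LS(\Kp) = \aleph_0$, and especially the applicability of Theorem \ref{ss} — genuinely transfers to $\Kp$. The cleanest justification is to observe that $\Kp$ is precisely the class of $\s_\Phi$-torsion modules over $\mathbb{Z}$ for $\Phi = \{p^n x = 0 : n < \omega\}$, that $\s_\Phi(M) \le_p M$ for all $M$ (since $\mathbb{Z}$ is Pr\"ufer), and invoke Remark \ref{comp} to the effect that all of Sections 3 and 4 apply with $\s_\Phi$ in place of $\s$. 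Given that, parts (1)–(3) are line-by-line adaptations of Lemma \ref{nss}, Theorem \ref{sta}, and the preceding lemma, which is why the paper omits the details.
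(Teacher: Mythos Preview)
Your proposal is correct and matches the paper's intended approach: the paper explicitly omits the proof with the remark that it is ``similar to those for torsion abelian groups,'' and Remark \ref{comp} already singles out $\Phi = \{p^n x = 0 : n < \omega\}$ as the relevant $\s_\Phi$-setup, so your reduction via that remark together with line-by-line adaptations of Lemma \ref{nss}, Theorem \ref{sta}, and the limit-model computation is exactly what is meant. One cosmetic slip: in part (3) you call $G = PE(M)$ a ``pure-injective abelian $p$-group,'' but $G$ need not be a $p$-group (it may have $\mathbb{Z}_p$ or $\mathbb{Q}$ summands, as your own decomposition shows); this does not affect the argument since for $M$ a $p$-group the torsion of $PE(M)$ coincides with its $p$-primary part.
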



\end{document}